\documentclass[letterpaper, twoside, noamsfonts, 11 pt]{amsart}

\usepackage{michael}
\usepackage{textcomp}

\usetikzlibrary{decorations, calc, intersections, through, positioning}
\usetikzlibrary{decorations.markings}
\usetikzlibrary{pgfplots.fillbetween}

\usepackage{float}

\addbibresource{references.bib}

\setstretch{1.1}

\setlength\parindent{1.5 em}

\relpenalty = 10000
\binoppenalty = 10000

\numberwithin{equation}{section}

\begin{document}
  \title{Maximal Operators and Fourier Restriction on the Moment Curve}
  \author{Michael Jesurum}
  \maketitle
  \begin{abstract}
    We bound certain $r$-maximal restriction operators on the moment curve.
  \end{abstract}
  \section{Introduction} \label{introduction}
  %
    % This holds the introduction to Maximal Operators and Fourier Restriction on
% the Moment Curve.
%
%
Let $\gamma(t) = (t, \frac{1}{2} t^{2}, \dots, \frac{1}{d} t^{d})$, and let
$\Gamma$ be the image of this curve for $t \in \mathbb{R}$. Drury~\cite{Drury}
proved the Fourier restriction estimate
\begin{equation*}
  ||\hat{f}||_{L^{q}(\Gamma)} \leq C_{p} ||f||_{L^{p}(\mathbb{R}^{d})}
\end{equation*}
for $1 \leq p < \frac{d^{2} + d + 2}{d^{2} + d}$ and
$p' = \frac{d (d + 1)}{2} q$. In the spirit of \cite{Ramos2}, we study the
$r$-maximal form of this restriction operator: $M_{r}\hat{f}|_{\Gamma}$, where
\begin{equation*}
  M_{r}h(x) = \bigg(\sup_{s > 0}\  \dashint_{B(x, s)} |h|^{r}\bigg)^{1/r}
\end{equation*}
and $1 \leq r < \infty$. For $d \geq 3$, we have the following maximal
restriction theorem:
\begin{theorem} \label{main theorem}
  For $q = \frac{2}{d (d + 1)} p' > p$ and $r$ satisfying
  %
  % r in terms of p
  %
  % \begin{equation*}
  % %
  %   \begin{cases}
  %   %
  %     1 \leq p < \frac{d^{2} + d + 2}{d^{2} + d}
  %     \quad& 1 \leq r \leq 2
  %     \\
  %     1 \leq p < \frac{d^{2} + d + 2 (r - 1)}{d^{2} + d + 2 (r - 2)}
  %     \quad& 2 < r < \frac{d + 2}{2}
  %     \\
  %     1 \leq p \leq \frac{r d}{r d - 1}
  %     \quad& \frac{d + 2}{2} \leq r
  %   %
  %   \end{cases}
  % %
  % \end{equation*}
  %
  % p in terms of r
  %
  \begin{equation*}
    \begin{cases}
      r \leq \frac{p'}{d},
      &\text{if} \quad 1 \leq p \leq \frac{d^{2} + 2 d}{d^{2} + 2 d - 2};
      \\
      r < p' - \frac{d^{2} + d - 2}{2},
      &\text{if} \quad
      \frac{d^{2} + 2 d}{d^{2} + 2 d - 2} < p < \frac{d^{2} + d + 2}{d^{2} + d},
    \end{cases}
  \end{equation*}
  we have the following estimate for every $f \in L^{p}(\mathbb{R}^{d})$:
  \begin{equation} \label{main theorem estimate}
    ||M_{r}\hat{f}||_{L^{q}(\Gamma)}
    \leq C_{p, r} ||f||_{L^{p}(\mathbb{R}^{d})}.
  \end{equation}
\end{theorem}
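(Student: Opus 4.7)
The plan is to follow the approach of Ramos, adapted from the sphere to the moment curve. First, I would linearize the supremum in the definition of $M_{r}$ by choosing a measurable scale function $s : \mathbb{R}^{d} \to (0, \infty)$, reducing matters to the estimate
\[
\Big\| \Big( \dashint_{B(x, s(x))} |\widehat f|^{r} \Big)^{1/r} \Big\|_{L^{q}(\Gamma, \, dx)} \lesssim \|f\|_{L^{p}(\mathbb{R}^{d})}
\]
uniformly in $s$. A Littlewood--Paley decomposition $f = \sum_{k} f_{k}$, with $f_{k}$ supported in the frequency annulus $\{|\xi| \sim 2^{k}\}$, splits $\widehat f$ into pieces whose natural spatial scale is $2^{-k}$; this scale will be compared to $s(x)$ at each $x$.

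Second, for each $k$, I plan to split the expression $\dashint_{B(x, s(x))} |\widehat{f_{k}}|^{r}$ into two cases depending on whether $s(x) \leq 2^{-k}$ or $s(x) > 2^{-k}$. In the ``slow'' regime $s(x) \leq 2^{-k}$, the function $\widehat{f_{k}}$ is essentially constant on $B(x, s(x))$; a Bernstein estimate plus a Taylor expansion gives
\[
\Big(\dashint_{B(x, s(x))} |\widehat{f_{k}}|^{r}\Big)^{1/r} \lesssim |\widehat{f_{k}}(x)| + \text{(summable error)},
\]
so that Drury's theorem applied to $f_{k}$ controls this contribution. In the ``fast'' regime $s(x) > 2^{-k}$, I would use a Drury-type restriction estimate on the $2^{-k}$-tubular neighborhood of $\Gamma$ (obtained via $TT^{*}$ together with the affine scaling symmetry of the moment curve), combined with the volume factor $|B(x, s(x))|^{-1/r}$ and the constraint $s(x) \geq 2^{-k}$, to produce a bound with a power of $2^{k}$ against $\|f_{k}\|_{L^{p}}$.

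Summing in $k$ via Littlewood--Paley and Minkowski (valid since $q \geq r$ in our range) then yields the theorem, with the two cases on $r$ arising as the balance conditions under which both regimes are simultaneously admissible. Specifically, Case 1 ($p \leq \frac{d^{2} + 2d}{d^{2} + 2d - 2}$) is governed by the ``slow'' regime and gives the sharp endpoint $r \leq p'/d$, while Case 2 is governed by the interpolation with the ``fast'' regime near the Drury endpoint and produces the strict inequality $r < p' - (d^{2} + d - 2)/2$. The two constraints coincide at the transition $p' = (d^{2} + 2d)/2$, confirming that they fit together continuously.

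The main obstacle I anticipate is the ``fast'' regime in Case 2: establishing a sufficiently strong restriction estimate on a tubular neighborhood of $\Gamma$ near the Drury endpoint. This will likely require an $\varepsilon$-removal argument together with a careful accounting of the torsion of $\Gamma$, and it is precisely the $\varepsilon$-loss that forces the inequality on $r$ to be strict rather than sharp in Case 2. Integrating this tube estimate with the Littlewood--Paley sum without a logarithmic loss in $k$ is the delicate technical step; a Rubio de Francia square-function substitute for the supremum, applied before the Drury input, is a natural tool to try.
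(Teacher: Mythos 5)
Your approach is genuinely different from the paper's, and it has gaps that I do not think can be fixed without replacing the strategy.

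The paper does not use a Littlewood--Paley decomposition of $f$ at all. Instead it linearizes the maximal operator (this part you have right) and then runs Drury's original induction on $p$, exactly as in Drury's proof of the moment-curve restriction theorem: one raises $M^{*}_{r,\eta,\rho,k}f$ to the $d$th power, makes the change of variables $y = \sum_{j}\gamma(x_{j})$ with the Vandermonde Jacobian, proves a mixed-norm $L^{1}_{h'}L^{p}_{t}$ bound by applying the induction hypothesis in the $t$ variable for each fixed gap vector $h'$, proves an $L^{r+\epsilon}$ bound by Plancherel/Hausdorff--Young, and interpolates. The decisive technical point, which has no analogue in your proposal, is that the $L^{r+\epsilon}$ bound requires controlling the iterated convolution of the averaging kernels against $\widehat{H}$ by $M_{r}^{nk}\widehat{H}$ pointwise (the paper's Lemma~\ref{A convolution}). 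This is exactly why the paper must prove the stronger Proposition~\ref{main theorem first reduction} about the $k$-fold composite $M_{r}^{k}$ rather than $M_{r}$ alone: the induction generates new compositions, so they must be carried along. Without this device, the argument does not close.

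Two concrete problems with your plan. First, the claim that $q \geq r$ throughout the stated range is false: at $d = 3$ and $p$ near the Christ--Prestini endpoint $\frac{d^{2}+2d}{d^{2}+2d-2} = \frac{15}{13}$, one has $q = \frac{2}{d(d+1)}p' = \frac{5}{4}$ while $r$ may be as large as $\frac{p'}{d} = \frac{5}{2}$; more generally $q \to 1$ while $r$ can grow like $d/2$. So Minkowski does not let you sum the Littlewood--Paley pieces. Second, the ``slow regime'' estimate is not a summable error: if $\widehat{f_{k}}$ is band-limited to frequencies $\sim 2^{k}$, Bernstein gives $|\nabla \widehat{f_{k}}| \lesssim 2^{k}\|\widehat{f_{k}}\|_{\infty}$, so on a ball of radius $s(x) \sim 2^{-k}$ the variation is merely $O(\|\widehat{f_{k}}\|_{\infty})$, not small. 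Finally, your strategy of applying a Drury-type estimate on $2^{-k}$-neighborhoods of $\Gamma$ and summing is essentially the ``restriction as a black box'' approach; the paper explicitly notes that this method (Kova\v{c}'s) yields the strictly smaller range $p < \frac{d^{2}+d+2}{d^{2}+d+1}$, whereas obtaining the full Drury range $p < \frac{d^{2}+d+2}{d^{2}+d}$ requires threading the maximal averaging through Drury's induction itself.
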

M\"uller, Ricci, and Wright~\cite{MullerRicciWright} introduced maximal
restriction theorems to obtain a pointwise interpretation of the
restriction operator associated to $C^{2}$ curves in \(\mathbb{R}^{2}\).
After proving bounds for a two-parameter maximal restriction operator, they
introduced the operator
\begin{equation} \label{M2}
  M_{2}h(x) = (M|h|^{2})^{1/2}(x)
\end{equation}
to aid with bounds for a strong maximal restriction operator.
Following their logic, for the restriction operator \(\mathcal{R}\) associated 
to the moment curve \(\gamma\), the case \(r = 2\) in
Theorem~\ref{main theorem} implies:
\begin{corollary} \label{pointwise corollary}
  Let \(f \in L^{p}(\mathbb{R}^{d})\) and
  \(1 \leq p < \frac{d^{2} + d + 2}{d^{2} + d}\). With respect to arclength
  measure, almost every \(x \in \Gamma\) is a Lebesgue point for \(\hat{f}\)
  and the regularized value of \(\hat{f}\) at \(x\) coincides with
  \(\mathcal{R}f(x)\).
  %
  % This includes both points in MRW, but I think I only want the second.
  % \begin{enumerate} [label = (\roman*)]
  % %
  %   \item
  %   %
  %     Let \(\chi \in \mathcal{S}(\mathbb{R}^{d})\) with \(\int \chi = 1\), and
  %     for \(\epsilon > 0\) let
  %     %
  %     \begin{equation*}
  %     %
  %       \chi_{\epsilon}(x) = \epsilon^{-d}\chi\Big(\frac{x}{\epsilon}\Big).
  %     %
  %     \end{equation*}
  %     %
  %     Then, with respect to arclength measure, for almost every
  %     \(x \in \Gamma\),
  %     %
  %     \begin{equation*}
  %     %
  %       \lim_{\epsilon \rightarrow 0} \hat{f} \ast \chi_{\epsilon}(x)
  %       = \mathcal{R}f(x).
  %     %
  %     \end{equation*}
  %   %
  %   \item
  %   %
  %     With respect to arclength measure, almost every \(x \in \Gamma\) is a
  %     Lebesgue point for \(\hat{f}\) and the regularized value of \(\hat{f}\)
  %     at \(x\) coincides with \(\mathcal{R}f(x)\).
  % %
  % \end{enumerate}
%
\end{corollary}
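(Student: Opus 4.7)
The plan is to combine the $r = 2$ case of Theorem~\ref{main theorem} with Drury's restriction estimate by a density argument in the spirit of M\"uller--Ricci--Wright~\cite{MullerRicciWright}. First I would verify that $r = 2$ lies in the admissible range for every $p$ with $1 \leq p < \frac{d^{2} + d + 2}{d^{2} + d}$: in the first regime the constraint $2 \leq p'/d$ reduces at the breakpoint to $d \geq 2$, while in the second regime $2 < p' - \frac{d^{2} + d - 2}{2}$ is precisely equivalent to the strict upper bound on $p$. With this in hand we have simultaneously
\begin{equation*}
  \|M_{2} \hat{f}\|_{L^{q}(\Gamma)}
  \leq C_{p} \|f\|_{L^{p}(\mathbb{R}^{d})}
  \quad \text{and} \quad
  \|\mathcal{R} f\|_{L^{q}(\Gamma)}
  \leq C_{p} \|f\|_{L^{p}(\mathbb{R}^{d})}.
\end{equation*}

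Next I would introduce the Lebesgue defect
\begin{equation*}
  E f(x)
  = \limsup_{s \to 0^{+}} \bigg(\dashint_{B(x, s)}
  |\hat{f}(y) - \mathcal{R} f(x)|^{2}\, dy\bigg)^{1/2},
\end{equation*}
defined on the full-measure subset of $\Gamma$ where $\mathcal{R} f$ makes sense. For Schwartz $g$, continuity of $\hat{g}$ yields $E g \equiv 0$ pointwise. For arbitrary $f \in L^{p}(\mathbb{R}^{d})$ and $\varepsilon > 0$, I would split $f = g + h$ with $g$ Schwartz and $\|h\|_{L^{p}} < \varepsilon$; Minkowski's inequality applied on each ball gives
\begin{equation*}
  E f(x)
  \leq E g(x) + M_{2} \hat{h}(x) + |\mathcal{R} h(x)|
  = M_{2} \hat{h}(x) + |\mathcal{R} h(x)|.
\end{equation*}
The two estimates above then yield $\|E f\|_{L^{q}(\Gamma)} \leq 2 C_{p} \varepsilon$, and letting $\varepsilon \to 0$ forces $E f = 0$ almost everywhere on $\Gamma$.

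Finally, Cauchy--Schwarz converts the vanishing of $E f$ into the classical Lebesgue-point statement, since
\begin{equation*}
  \dashint_{B(x, s)} |\hat{f}(y) - \mathcal{R} f(x)|\, dy
  \leq \bigg(\dashint_{B(x, s)} |\hat{f}(y) - \mathcal{R} f(x)|^{2}\, dy\bigg)^{1/2},
\end{equation*}
identifying $\mathcal{R} f(x)$ as the regularized value at almost every $x$. I do not expect a substantial obstacle; the main piece of technical care is that $\mathcal{R} h$ is defined only almost everywhere, which one handles by fixing a countable sequence $(h_{n})$ with $\|h_{n}\|_{L^{p}} \to 0$ and intersecting the resulting full-measure subsets of $\Gamma$ on which each inequality holds.
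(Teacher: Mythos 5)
Your proposal is correct and carries out exactly the argument the paper invokes implicitly: the paper gives no written proof of Corollary~\ref{pointwise corollary}, stating only that it follows from the $r=2$ case of Theorem~\ref{main theorem} ``following the logic'' of M\"uller--Ricci--Wright, and your verification that $r=2$ is admissible throughout the Drury range, your $L^{2}$-averaged Lebesgue defect $Ef$, the density argument combining the $M_{2}$ bound with Drury's restriction estimate, and the closing Cauchy--Schwarz reduction are precisely the steps that reference delegates. The only point worth being a little more explicit about is the one you already flag at the end, namely fixing a.e.\ representatives so that $Ef$ is well-defined and the splitting $Ef \leq M_{2}\hat{h} + |\mathcal{R}h|$ holds a.e.; your sketch of intersecting full-measure sets along a sequence $h_n \to 0$ in $L^{p}$ handles this correctly.
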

\newpage
Later, Vitturi~\cite{Vitturi} proved similar maximal restriction estimates in
the case of the unit sphere in any dimension $d \geq 3$. Ramos~\cite{Ramos1}
improved the known results on spheres in all dimensions, and then
in~\cite{Ramos2} focused on dimensions $d = 2$ and $d = 3$. In particular, he
generalized the operator~\eqref{M2}~to
\begin{equation*}
  M_{r}h = (M|h|^{r})^{1/r}(x)
\end{equation*}
for $1 \leq r < \infty$, and Theorem~2 in that paper was a maximal restriction
result for this operator on the unit circle for $p < \frac{4}{3}$ and
$r \leq 2$. Thus, in the case $d = 2$, Theorem~\ref{main theorem} is due to
Ramos~\cite{Ramos2}, since the arguments that apply to the circle also
apply to the~parabola.
\par
Kova\v c~\cite{Kovac} took a more general approach, proving maximal and
variational restriction estimates using restriction inequalities as a black
box. Theorem~1, Remark~2, and Remark~3 in that paper combine with
Drury's~\cite{Drury} restriction estimate to show that
\begin{equation*}
  ||M_{2}\hat{f}||_{L^{q}(\Gamma)} \leq C_{p} ||f||_{L^{p}(\mathbb{R}^{d})}
\end{equation*}
for $1 \leq p < \frac{d^{2} + d + 2}{d^{2} + d + 1}$ and
$p' = \frac{d (d + 1)}{2} q$. Theorem~\ref{main theorem} extends this range
of $p$ to the full Drury range for $M_{2}$ and gives estimates of the
form~\eqref{main theorem estimate} for $r > 2$. See
also~\cite{KovacSilva} for more on variational restriction theorems.
\par
In the case $d \geq 3$, the first two cases of Theorem~\ref{main theorem} are
distinct. When $r = 2$, we obtain the full range given by Drury:
$p < \frac{d^{2} + d + 2}{d^{2} + d}$. For $r = \frac{d + 2}{2}$, the range
of $p$ corresponds to the Christ-Prestini
$p < \frac{d^{2} + 2 d}{d^{2} + 2 d - 2}$ (see~\cite{Christ}
and~\cite{Prestini}). Figure~\ref{fig: range of r and p} illustrates these
ranges.
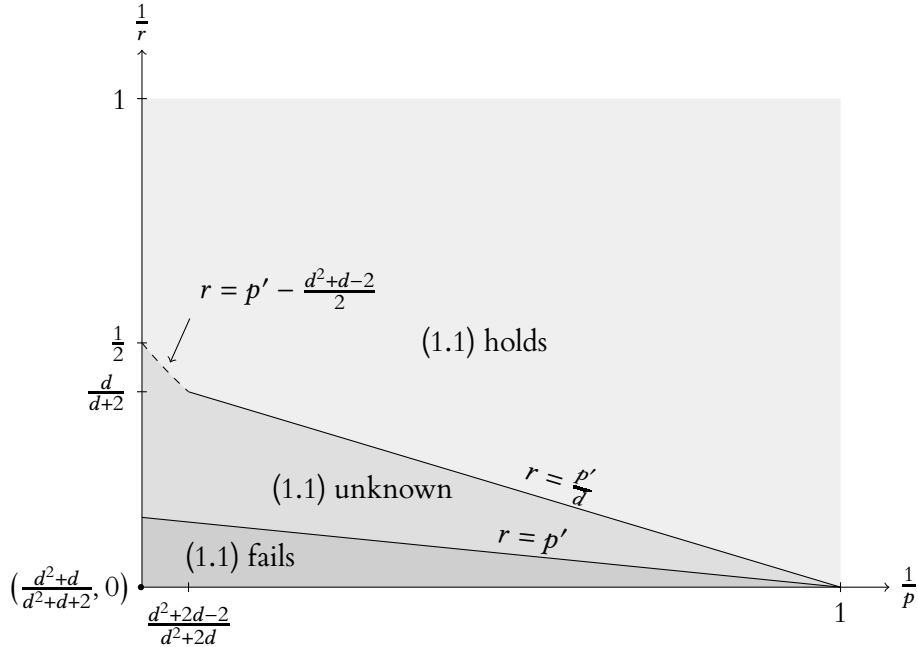
\begin{figure}[H]
  \centering
  \begin{tikzpicture}[xscale = 65, yscale = 6.5]
  %
    % main theorem estimate is good.
    %
    \fill [fill = lightgray!25!white]
    ({6 / 7}, 1)
    -- ({6 / 7}, .5)
    [domain = {6 / 7} : {13 / 15}]
    plot (\x, {(1 - \x) / (5*\x - 4)})
    -- (1, 0)
    -- (1, 1)
    -- ({6 / 7}, 1);
    %
    % main theorem estimate is a maybe.
    %
    \fill [fill = lightgray!50!white]
    ({6 / 7}, .5)
    [domain = {6 / 7} : {13 / 15}]
    plot (\x, {(1 - \x) / (5*\x - 4)})
    -- (1, 0)
    -- ({6 / 7}, {1 / 7})
    -- ({6 / 7}, .5);
    %
    % main theorem estimate is bad.
    %
    \fill [fill = lightgray!75!white]
    ({6 / 7}, {1 / 7}) -- (1, 0) -- ({6 / 7}, 0) -- cycle;
    %
    % x axis
    %
    \draw[->] ({6 / 7}, 0) -- (1.01, 0) node [right] {$\frac{1}{p}$};
    %
    % y axis
    % 
    \draw[->] ({6 / 7}, 0) -- ({6 / 7}, 1.1) node [above] {$\frac{1}{r}$};
    %
    % origin
    %
    \coordinate
    [label = left: {$\big(\frac{d^{2} + d}{d^{2} + d + 2}, 0\big)$}]
    (O) at ({6 / 7}, 0);
    %
    % Label for origin. The radii are off by a factor of 10 because of the
    % difference in scale factors for x and y.
    %
    \filldraw (O) circle [x radius = .015 pt, y radius = .15 pt];
    %
    % Labels of 1 on each axis.
    %
    \draw (1, -.01) node [below] {1} -- (1, .01);
    \draw ({6 / 7 -.001}, 1) node [left] {1} -- ({6 / 7 + .001}, 1);
    %
    % Main range of r from the theorem.
    %
    \draw [dashed, domain = {6 / 7} : {13 / 15}]
    plot (\x, {(1 - \x) / (5*\x - 4)});
    %
    % Labels for the previous line.
    %
    \draw ({6 / 7 - .001}, .5) node [left] {$\frac{1}{2}$}
    -- ({6 / 7 + .001}, .5);
    \coordinate [label = above right: {$r = p' - \frac{d^{2} + d - 2}{2}$}]
    (theoremLine) at ({6 / 7 + .01}, .55);
    \draw [->] (theoremLine)
    -- ($({(6 / 7 + 13 / 15) / 2}, .45) + (.001, .01)$);
    %
    % Range of r obtained from interpolation between Christ and infinity.
    % The atan(-.3) is because the slope of the line is -3, but x is scaled
    % by a factor of 10 compared with y.
    %
    \draw ({13 / 15}, .4)
    -- node [above right = - 3 pt, rotate = {atan(-.3)}]
    {$r = \frac{p'}{d}$} (1, 0);
    %
    % y coordinate label for the previous line.
    %
    \draw ({6 / 7 - .001}, .4) node [left] {$\frac{d}{d + 2}$}
    -- ({6 / 7 + .001}, .4);
    %
    % x coordinate label for the previous line.
    %
    \draw ({13 / 15}, -.01)
    node [below] {$\frac{d^{2} + 2 d - 2}{d^{2} + 2 d}$}
    -- ({13 / 15}, .01);
    %
    % Possible best range of r. Again, atan(-.1) is from the slope of -1
    % divided by 10.
    %
    \draw ({6 / 7}, {1 / 7})
    -- node [above right = -3 pt, rotate = {atan(-.1)}] {$r = p'$} (1, 0);
    \node at ({6 / 7 + .07}, .5) {\eqref{main theorem estimate} holds};
    \node at ({6 / 7 + .045}, .2) {\eqref{main theorem estimate} unknown};
    \node at ({6 / 7 + .02}, .06) {\eqref{main theorem estimate} fails};
  \end{tikzpicture}
  \caption{\small \textit{Range of $r$ and $p$ for which the $r$-maximal
  restriction operator is bounded from $L^{p}$ to $L^{q}$, where
  $q = \frac{2}{d (d + 1)} p'$.}}
  \label{fig: range of r and p}
\end{figure}
\subsection*{Outline of Proof}
  The overall structure follows Drury's
  induction scheme from \cite{Drury}. To accomodate this, we prove the
  superficially stronger result:
  \begin{proposition} \label{main theorem first reduction}
    Let $2 \leq r < \frac{d + 2}{2}$. Denote
    by $M_{r}^{k}$ the $k$-fold composition of $M_{r}$ with itself. Then for
    each $k \in \mathbb{N}$,
    $1 \leq p < \frac{d^{2} + d + 2 (r - 1)}{d^{2} + d + 2 (r - 2)}$, and
    $p' = \frac{1}{2} d (d + 1) q$, we have
    \begin{equation*}
      ||M_{r}^{k}\hat{f}||_{L^{q}(\Gamma)}
      \leq C_{p, r} ||f||_{L^{p}(\mathbb{R}^{d})}.
    \end{equation*}
  \end{proposition}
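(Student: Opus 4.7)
I would argue by induction on $k$. The base case $k = 0$ is Drury's restriction theorem~\cite{Drury}: the hypothesis $r \ge 2$ forces $\frac{d^{2} + d + 2(r - 1)}{d^{2} + d + 2(r - 2)} \le \frac{d^{2} + d + 2}{d^{2} + d}$, so the $p$-range in the proposition sits inside Drury's.

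For the inductive step, write $M_{r}^{k}\hat{f} = M_{r}(M_{r}^{k - 1}\hat{f})$ and linearize the outer maximal operator by selecting a measurable radius $s : \mathbb{R} \to (0, \infty)$ with
\begin{equation*}
  M_{r}^{k}\hat{f}(\gamma(t))^{r}
  \le 2 \dashint_{B(\gamma(t), s(t))} (M_{r}^{k - 1}\hat{f}(y))^{r} \, dy
  = C \int_{B(0, 1)} (M_{r}^{k - 1}\hat{f}(\gamma(t) + s(t) z))^{r} \, dz.
\end{equation*}
Integrating in $L^{q/r}(dt)$ and pulling the $dz$-integral outside the norm (via Minkowski when $q \ge r$, and via interpolation with the trivial endpoint $p = 1$ otherwise) reduces the problem to bounding $\|M_{r}^{k - 1}\hat{f}(\gamma(\cdot) + s(\cdot) z)\|_{L^{q}(dt)}$ uniformly in $z \in B(0, 1)$. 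Since $M_{r}^{k - 1}$ commutes with translations, for any fixed $v$ we have
\begin{equation*}
  M_{r}^{k - 1}\hat{f}(\gamma(t) + v)
  = M_{r}^{k - 1}\bigl[\,\widehat{e^{-2 \pi i v \cdot x} f(x)}\,\bigr](\gamma(t)),
\end{equation*}
and modulation preserves the $L^{p}$-norm of $f$; so if $s(t) z$ were constant in $t$, the inductive hypothesis for $M_{r}^{k - 1}$ would close the argument.

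The principal obstacle, and the point at which Drury's induction scheme enters, is the $t$-dependence of the translation $s(t) z$. I would handle it by decomposing $s$ dyadically into level sets $E_{j} = \{t : s(t) \in [2^{j}, 2^{j + 1})\}$ and further partitioning each $E_{j}$ into sub-intervals on which $s(t) z$ is essentially constant; on each piece the inductive hypothesis applies to a modulated $f$, and the results are reassembled using the intrinsic scaling symmetry $\gamma(\lambda t) = D_{\lambda}\gamma(t)$ with $D_{\lambda} = \operatorname{diag}(\lambda, \lambda^{2}, \ldots, \lambda^{d})$ of the moment curve. The summation over scales $j$ produces a geometric factor $2^{-j \alpha(p, r)}$ whose positivity corresponds precisely to the condition $p < \frac{d^{2} + d + 2(r - 1)}{d^{2} + d + 2(r - 2)}$, while the constraint $r < \frac{d + 2}{2}$ ensures this summability window is non-empty. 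I expect the scale-by-scale bookkeeping\textemdash keeping the constants uniform in the measurable choice of $s(\cdot)$ and tracking how they depend on $k$\textemdash to be the most delicate part of the argument.
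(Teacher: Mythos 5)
Your proposal takes a genuinely different route from the paper, but the inductive step has a gap that I do not think can be repaired along the lines you sketch.

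The paper does not induct on $k$. It linearizes all $k$ levels of $M_{r}^{k}$ at once (Kolmogorov--Seliverstov--Plessner style), introducing measurable radii $\rho_{1}, \dots, \rho_{k}$ and a weight $\eta$ satisfying~\eqref{eta assumption}, and then runs Drury's induction on the Lebesgue exponent $p$, with constants uniform in $k$. This uniformity is not a luxury: Lemma~\ref{TG L^2} feeds $M_{r}^{kd}\hat{H}$ back into the argument, so a bound blowing up in $k$ would not close.

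The gap in your scheme is at the heart of the inductive step. After linearizing the outer $M_{r}$, you need to control $\bigl\|M_{r}^{k-1}\hat{f}\bigl(\gamma(\cdot) + s(\cdot)z\bigr)\bigr\|_{L^{q}(dt)}$ uniformly in $z$ and in the measurable selector $s$. Two problems. First, the modulation identity $M_{r}^{k-1}\hat{f}(\gamma(t) + v) = M_{r}^{k-1}[\widehat{e^{-2\pi i v\cdot x}f}](\gamma(t))$ requires $v$ to be a fixed vector; once $v = s(t)z$ varies with $t$ you are evaluating $M_{r}^{k-1}\hat{f}$ along a perturbed curve $t \mapsto \gamma(t) + s(t)z$ that is not an affine image of $\Gamma$, and the inductive hypothesis says nothing about it. Second, the proposed dyadic rescue does not actually freeze $s(t)z$: on a level set $\{t : s(t) \in [2^{j}, 2^{j+1})\}$ the translation still wanders across a ball of radius $\sim 2^{j}$, which is the same scale as the averaging radius itself, and since $s$ is merely measurable you cannot refine to sub-intervals on which $s$ is ``essentially constant.'' Put differently, the quantity you reduce to is not the $k{-}1$ inductive hypothesis but something strictly stronger (uniform control of $M_{r}^{k-1}\hat{f}$ on a measurably-chosen perturbation of $\Gamma$), so the induction on $k$ does not close. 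This is precisely the difficulty the paper's KSP linearization is built to absorb: the $k$-fold nested radii live inside the kernel $\mathcal{A}^{k}_{x}$ and are handled once and for all by Lemma~\ref{A convolution}, rather than being peeled off one level at a time.

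Your claim that the constraint $p < \frac{d^{2} + d + 2(r-1)}{d^{2} + d + 2(r-2)}$ should emerge as a summability threshold over dyadic scales $j$ also does not match the paper's mechanism: in the paper this exponent arises from an interpolation between the $L^{2}$-type bound~\eqref{TG L^2 bound} (which is where the $r < \frac{d+2}{2}$ and the dependence on $r$ enter, via Hausdorff--Young and the boundedness of $M_{r}^{kd}$ on $L^{r+\epsilon}$) and the mixed-norm bound~\eqref{TG L^p bound}, followed by H\"older against the weak-$L^{d/2}$ Vandermonde weight. There is no dyadic summation over scales of $s$ anywhere in the argument.
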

  Indeed, for $r \geq \frac{d + 2}{2}$, we can interpolate the above result
  with the bound
  \begin{equation*}
    ||M_{\infty}\hat{f}||_{L^{\infty}(\Gamma)}
    \leq ||f||_{L^{1}(\mathbb{R}^{d})},
  \end{equation*}
  which follows from Hausdorff-Young. For $1 \leq r < 2$, we can apply
  H\"older's inequality to see that
  \begin{equation*}
    M_{r}\hat{f}(x) \leq M_{2}\hat{f}(x).
  \end{equation*}
  Thus, Theorem~\ref{main theorem} follows from
  Proposition~\ref{main theorem first reduction}. To prove
  Proposition~\ref{main theorem first reduction}, we first linearize the
  operator $f \mapsto M_{r}^{k}\hat{f}|_{\Gamma}$ 
  (Section~\ref{linearization}). Then, in Section~\ref{proof}, we apply the
  induction hypothesis to prove a mixed-norm estimate for the $d$-fold power of
  the linear operator from Section~\ref{linearization}. We interpolate this
  estimate with an $L^{2}$ bound for that same operator that comes from
  Plancherel. This interpolation allows us to increase the value of $p$, which
  completes the induction.
\subsection*{Acknowledgements}
  The author would like to thank Betsy Stovall for suggesting this project and
  for advising throughout the process. He also thanks the anonymous referee
  for their comments and suggestions. This work was supported by NSF grant
  DMS-1653264.
%%% Local Variables:
%%% mode: latex
%%% TeX-master: "main"
%%% End:

  %
  \section{Kolmogorov-Seliverstov-Plessner Linearization} \label{linearization}
  %
    % This houses the section Kolmogorov-Saliverstov Linearization for the paper
% Maximal Operators and Fourier Restriction to the Moment Curve.
%
%
We first fix $2 \leq r < \frac{d + 2}{2}$. This value of $r$ will remain fixed
throughout this section and the next. The first step is to linearize the
maximal operator given in Proposition~\ref{main theorem first reduction}. The
technique here is similar to~\cite{Ramos2}, which built on the
techniques of \cite{MullerRicciWright}, \cite{Vitturi}, and \cite{Ramos1}.
\par
Let $\chi_{a}(x)$ be the $L^{1}$-normalized characteristic function of the ball
of radius $a$; that is,
\begin{equation*}
  \chi_{a}(x) = \frac{1}{|B_{a}|} \chi \bigg( \frac{x}{a} \bigg),
\end{equation*}
where $B_{a}$ is the ball centered at 0 with radius $a$. Let
$\rho_{1}, \ldots, \rho_{k} \colon \mathbb{R}^{d} \rightarrow \mathbb{R}_{> 0}$
be measurable and
$\eta \colon \mathbb{R}^{d} \times \mathbb{R}^{d} \rightarrow \mathbb{C}$
be a measurable function such that
\begin{equation} \label{eta assumption}
  \dashint_{B_{\rho_{1}(x)}(x)} \ldots 
  \dashint_{B_{\rho_{k}(y_{k - 1})}(y_{k - 1})} |\eta(x, y_{k})|^{r'}
  \mathrm{d} y_{k} \ldots \mathrm{d} y_{1}
  \leq 1
\end{equation}
for every $x \in \mathbb{R}^{d}$. Set
\begin{equation} \label{A operator}
  \mathcal{A}^{k}_{x}(z_{1}, \ldots, z_{k})
  = \eta(x, x - z_{1} - \ldots - z_{k})
  \chi_{\rho_{1}(x)}(z_{1}) 
  \chi_{\rho_{2}(x - z_{1})}(z_{2}) \ldots
  \chi_{\rho_{k}(x - z_{1} - \ldots - z_{k - 1})}(z_{k}).
\end{equation}
and
\begin{equation} \label{M operator A form}
  M_{r, \eta, \rho, k} f(x)
  = \int_{\mathbb{R}^{k d}} \hat{f}(x - z_{1} - \ldots - z_{k})
  \mathcal{A}^{k}_{x}(z_{1}, \ldots, z_{k})
  \mathrm{d} z_{1} \ldots \mathrm{d} z_{k},
\end{equation}
or, equivalently,
\begin{equation} \label{M operator average form}
  M_{r, \eta, \rho, k} f(x)
  = \dashint_{B_{\rho_{1}(x)}(x)} \ldots 
  \dashint_{B_{\rho_{k}(z_{k - 1})}(z_{k - 1})} \hat{f}(z_{k}) \eta(x, z_{k})
  \mathrm{d} z_{k} \ldots \mathrm{d} z_{1}.
\end{equation}
\begin{lemma}
  Suppose there is $C > 0$ such that for every $\eta_{x}$ and
  $\rho_{1}, \ldots, \rho_{k}$ as above, and for all $f$ Schwartz,
  \begin{equation*}
    ||M_{r, \eta, \rho, k}f||_{L^{q}(\Gamma)}
    \leq C ||f||_{L^{p}(\mathbb{R}^{d})}.
  \end{equation*}
  Then
  \begin{equation*}
    ||M_{r}^{k}\hat{f}||_{L^{q}(\Gamma)} \leq C ||f||_{L^{p}(\mathbb{R}^{d})}.
  \end{equation*}
  for all Schwartz functions $f$.
\end{lemma}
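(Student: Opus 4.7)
The plan is to carry out a Kolmogorov--Seliverstov--Plessner linearization: peel off one $M_{r}$ at each of the $k$ iterations to record a nearly-optimal radius $\rho_{j}$, and then invoke $L^{r}$--$L^{r'}$ duality on the pushforward (onto the innermost variable $y_{k}$) of the resulting nested averaging measure. The payoff of routing the duality through the pushforward is that it produces an $\eta$ depending only on $(x, y_{k})$, which is precisely the form required by~\eqref{eta assumption}.

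First I would discretize. Since each iterate $M_{r}^{j} \hat{f}$ is Borel measurable, the map $(y, s) \mapsto \dashint_{B(y, s)} |M_{r}^{j} \hat{f}|^{r}$ is jointly measurable in $(y, s)$. Restricting to a fixed countable dense set $S \subset \mathbb{R}_{> 0}$ of radii and selecting the first $s \in S$ (in some enumeration) that gets within a factor $1 + \varepsilon$ of the supremum produces measurable selections $\rho_{j}$, each depending only on $y_{j - 1}$ (with $y_{0} := x$), satisfying
\[
  M_{r}^{k} \hat{f}(x)
  \leq (1 + \varepsilon)^{k / r}
  \bigg( \dashint_{B_{\rho_{1}(x)}(x)} \ldots \dashint_{B_{\rho_{k}(y_{k - 1})}(y_{k - 1})} |\hat{f}(y_{k})|^{r} \, \mathrm{d} y_{k} \ldots \mathrm{d} y_{1} \bigg)^{1 / r}.
\]

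Let $\mu_{x}$ be the probability measure on $\mathbb{R}^{k d}$ given by these nested averages and $\nu_{x}$ its pushforward to the $y_{k}$-coordinate; the right-hand side is then $(1 + \varepsilon)^{k / r} \|\hat{f}\|_{L^{r}(\nu_{x})}$. I would now set
\[
  \eta(x, y) = \frac{|\hat{f}(y)|^{r - 1} \, \overline{\operatorname{sgn} \hat{f}(y)}}{\|\hat{f}\|_{L^{r}(\nu_{x})}^{r - 1}},
\]
interpreted as $0$ where the denominator vanishes. Since $r \geq 2$ forces $(r - 1) r' = r$, this $\eta$ is jointly measurable in $(x, y)$ and satisfies $\int |\eta(x, y)|^{r'} \, \mathrm{d} \nu_{x}(y) = 1$, which by Fubini is precisely~\eqref{eta assumption}. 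It also realizes
\[
  \|\hat{f}\|_{L^{r}(\nu_{x})} = \int \hat{f}(y) \eta(x, y) \, \mathrm{d} \nu_{x}(y) = M_{r, \eta, \rho, k} f(x),
\]
the last equality by Fubini, since $\eta(x, \cdot)$ depends only on the terminal variable.

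Combining gives $M_{r}^{k} \hat{f}(x) \leq (1 + \varepsilon)^{k / r} M_{r, \eta, \rho, k} f(x)$; applying the hypothesis and letting $\varepsilon \to 0$ then finishes the lemma. I expect the only mildly subtle point to be the structural one of arranging that $\eta$ depend only on $(x, y_{k})$, which is exactly what the pushforward-measure duality delivers; the measurable selection of the $\rho_{j}$ is otherwise routine.
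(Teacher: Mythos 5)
Your proposal is correct and follows essentially the same route as the paper: you pick the same extremizing $\eta$ (your $|\hat{f}|^{r-1}\overline{\operatorname{sgn}\hat f}/\|\hat f\|_{L^r(\nu_x)}^{r-1}$ is the paper's $\overline{\hat f}\,|\hat f|^{r-2}$ normalized by the same nested average), verify~\eqref{eta assumption} via $(r-1)r'=r$, and then choose radii nearly attaining the iterated suprema. The only difference is that you spell out the measurable selection of the $\rho_j$ from a countable dense set of radii, a detail the paper compresses into ``for well-chosen $\rho_1,\dots,\rho_k$''; note also that $(r-1)r'=r$ holds for all $1<r<\infty$, not just $r\ge 2$, so that parenthetical is harmless but superfluous.
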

\begin{proof}
  Let $f$ be a Schwartz function, and let
    \begin{equation*}
      \eta(x, y) = \frac{\overline{\hat{f}(y)}| 
      \hat{f}(y)|^{r - 2}}{\bigg( \dashint_{B_{\rho_{1}(x)}(x)} \ldots 
      \dashint_{B_{\rho_{k}(z_{k - 1})}(z_{k - 1})} |f(z_{k})|^{r}
      \mathrm{d} z_{k} \ldots \mathrm{d} z_{1}\bigg)^{\frac{1}{r'}}}.
    \end{equation*}
  Then for any $x \in \mathbb{R}^{d}$,
  \begin{align*}
    &\dashint_{B_{\rho_{1}(x)}(x)} \ldots 
    \dashint_{B_{\rho_{k}(y_{k - 1})}(y_{k - 1})} |\eta(x, y_{k})|^{r'}
    \mathrm{d} y_{k} \ldots \mathrm{d} y_{1}
    \\
    &= \frac{\dashint_{B_{\rho_{1}(x)}(x)} \ldots 
    \dashint_{B_{\rho_{k}(y_{k - 1})}(y_{k - 1})} |f(y_{k})|^{r'(r - 1)}
    \mathrm{d} y_{k} \ldots \mathrm{d} y_{1}}
    {\dashint_{B_{\rho_{1}(x)}(x)} \ldots 
    \dashint_{B_{\rho_{k}(z_{k - 1})}(z_{k - 1})} |f(z_{k})|^{r}
    \mathrm{d} z_{k} \ldots \mathrm{d} z_{1}}.
  \end{align*}
  Since $r' (r - 1) = r$, the numerator and denominator are equal and hence
  $\eta$ satisfies~\eqref{eta assumption}. Moreover,
  using~\eqref{M operator average form}, we have
  \begin{equation*}
    M_{r, \eta, \rho, k} f(x)
    = \frac{\dashint_{B_{\rho_{1}(x)}(x)} \ldots 
    \dashint_{B_{\rho_{k}(z_{k - 1})}(z_{k - 1})} |\hat{f}(z_{k})|^{r}
    \mathrm{d} z_{k} \ldots \mathrm{d} z_{1}}
    {\bigg( \dashint_{B_{\rho_{1}(x)}(x)} \ldots 
    \dashint_{B_{\rho_{k}(z_{k - 1})}(z_{k - 1})} |f(z_{k})|^{r}
    \mathrm{d} z_{k} \ldots \mathrm{d} z_{1}\bigg)^{\frac{1}{r'}}}.
  \end{equation*}
  Thus, we obtain
  \begin{equation*}
    M_{r, \eta, \rho, k} f(x) = 
    \bigg( \dashint_{B_{\rho_{1}(x)}(x)}\ldots
    \dashint_{B_{\rho_{k}(z_{k - 1})}(z_{k - 1})} |\hat{f}(z_{k})|^{r}
    \mathrm{d} z_{k} \ldots \mathrm{d} z_{1} \bigg)^{\frac{1}{r}}.
  \end{equation*}
  For well-chosen $\rho_{1}, \ldots, \rho_{k} $, this can be made arbitrarily
  close to $M_{r}^{k}\hat{f}(x)$, so the claim holds.
\end{proof}
Hereafter, we will use the form of $M_{r, \eta, \rho, k}$ given
in~\eqref{M operator A form}. As is often the case, it will be more convenient
to work with an extension operator rather than the restriction. Given 
$g \colon \mathbb{R}^{d} \rightarrow \mathbb{C}$ and
$f \colon \mathbb{R} \rightarrow \mathbb{C}$,
\begin{align*}
  \langle M_{r, \eta, \rho, k}g, f \rangle 
  &= \int_{\mathbb{R}} \int_{\mathbb{R}^{k d}}
  \hat{g}(\gamma(t) - z_{1} - \ldots - z_{k}) 
  \mathcal{A}^{k}_{\gamma(t)}(z_{1}, \ldots, z_{k}) 
  \overline{f(t)} \mathrm{d} z_{1} \ldots \mathrm{d} z_{k} \mathrm{d} t
  \\
  &= \int_{\mathbb{R}} \int_{\mathbb{R}^{k d}} \int_{\mathbb{R}^{d}} 
  e^{-2 \pi i \xi (\gamma(t) - z_{1} - \ldots - z_{k})} 
  g(\xi) \mathcal{A}^{k}_{\gamma(t)}(z_{1}, \ldots, z_{k}) \overline{f(t)}
  \mathrm{d} \xi \mathrm{d} z_{1} \ldots \mathrm{d} z_{k} \mathrm{d} t.
\end{align*}
Hence the adjoint is given by
\begin{align*}
  M_{r, \eta, \rho, k}^{*}f(\xi)
  &= \int_{\mathbb{R}} \int_{\mathbb{R}^{k d}} 
  e^{2 \pi i \xi (\gamma(t) - z_{1} - \ldots -z_{k})} 
  \overline{\mathcal{A}^{k}_{\gamma(t)}}(z_{1}, \ldots, z_{k}) f(t)
  \mathrm{d} z_{1} \ldots \mathrm{d} z_{k} \mathrm{d} t
  \\
  &= \int_{\mathbb{R}} e^{2 \pi i \xi \gamma(t)}
  \widehat{\overline{\mathcal{A}^{k}_{\gamma(t)}}}(\xi, \ldots, \xi) f(t)
  \mathrm{d} t.
\end{align*}
Setting $\vec{\xi}^{k} = (\xi, \ldots, \xi)$, we have
\begin{equation*}
  M_{r, \eta, \rho, k}^{*}f(\xi)
  = \int_{\mathbb{R}} e^{2 \pi i \xi \gamma(t)}
  \widehat{\overline{\mathcal{A}^{k}_{\gamma(t)}}}(\vec{\xi}^{k})
  f(t) \mathrm{d} t.
\end{equation*}
Proposition~\ref{main theorem first reduction} now follows from the following
lemma:
\begin{lemma} \label{main theorem second reduction}
  Let $1 \leq p < \frac{d^{2} + d + 2 (r - 1)}{2 (r - 1)}$,
  $q = \frac{d (d + 1)}{2} p'$, and $2 \leq r < \frac{d + 2}{2}$. There is
  $C > 0$ such that for $\rho_{1}, \ldots, \rho_{k}$ and $\eta$ measurable
  satisfying~\eqref{eta assumption}, and for all Schwartz functions $f$,
  \begin{equation} \label{main theorem second reduction estimate}
    ||M_{r, \eta, \rho, k}^{*}f||_{L^{q}(\mathbb{R}^{d})} \leq 
    C ||f||_{L^{p}(\Gamma)}.
  \end{equation}
\end{lemma}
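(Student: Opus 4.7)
The plan is to induct on $p$, in the spirit of Drury's scheme from~\cite{Drury}. The base case is $p = 1$, $q = \infty$: the hypothesis~\eqref{eta assumption} together with Jensen's inequality gives $||\mathcal{A}^{k}_{\gamma(t)}||_{L^{1}(\mathbb{R}^{kd})} \leq 1$ uniformly in $t$, so $|\widehat{\overline{\mathcal{A}^{k}_{\gamma(t)}}}(\vec{\xi}^{k})| \leq 1$ and the pointwise bound $|M^{*}_{r, \eta, \rho, k} f(\xi)| \leq ||f||_{L^{1}(\Gamma)}$ is immediate.

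For the inductive step, assume~\eqref{main theorem second reduction estimate} at some $p_{0}$ strictly below the target endpoint. I would analyze the $d$-fold product
\begin{equation*}
(M^{*}_{r, \eta, \rho, k} f(\xi))^{d} = \int_{\mathbb{R}^{d}} e^{2 \pi i \xi \cdot \Psi(\vec{t})} \prod_{j = 1}^{d} \widehat{\overline{\mathcal{A}^{k}_{\gamma(t_{j})}}}(\vec{\xi}^{k}) f(t_{j}) \, d\vec{t},
\end{equation*}
where $\Psi(\vec{t}) = \gamma(t_{1}) + \cdots + \gamma(t_{d})$. On the ordered sector $\{t_{1} < \cdots < t_{d}\}$, $\Psi$ is a diffeomorphism onto its image with Jacobian equal (up to sign) to the Vandermonde determinant $V(\vec{t}) = \prod_{i < j}(t_{j} - t_{i})$. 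Changing variables $\xi$-side and applying Plancherel on $\mathbb{R}^{d}$ then produces a weighted $L^{2}$ estimate of the schematic form
\begin{equation*}
||(M^{*} f)^{d}||_{L^{2}(\mathbb{R}^{d})}^{2} \lesssim \int_{t_{1} < \cdots < t_{d}} \frac{\prod_{j} |f(t_{j})|^{2}}{|V(\vec{t})|} W(\vec{t}) \, d\vec{t},
\end{equation*}
in which the weight $W$ packages the product $\prod_{j} |\widehat{\overline{\mathcal{A}^{k}_{\gamma(t_{j})}}}|^{2}$ and is controlled uniformly via the $L^{1}$ bound on $\mathcal{A}^{k}$ coming from~\eqref{eta assumption}.

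The second ingredient is a mixed-norm estimate for $(M^{*} f)^{d}$ obtained by recognizing the $\vec{t}$-integration above, after the Vandermonde change of variables, as an instance of an $M^{*}$-type operator acting on a product function with new multipliers $\tilde\eta$ and radii $\tilde\rho$ built from tensor products of the originals and still satisfying the analogue of~\eqref{eta assumption}. The induction hypothesis at $p_{0}$ then yields the desired mixed-norm bound. Riesz--Thorin interpolation between this bound and the Plancherel $L^{2}$ bound delivers a new estimate on $M^{*} f$ at an exponent $p_{1} > p_{0}$, and iterating saturates the range $p < \frac{d^{2} + d + 2(r - 1)}{2(r - 1)}$, which is the fixed point of the $p_{0} \mapsto p_{1}$ map.

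The hard part will be keeping control of the multiplier $\widehat{\overline{\mathcal{A}^{k}_{\gamma(t)}}}(\vec{\xi}^{k})$, which couples $t$ and $\xi$ in a nontrivial way and is evaluated along the diagonal $\vec{\xi}^{k}$. In the Plancherel step its $d$-fold product must be expanded and the resulting weights paired against $|V(\vec{t})|^{-1}$ using only the $r'$-information supplied by~\eqref{eta assumption}; this is where a Jensen/Hölder interpolation between $L^{1}$ and $L^{r'}$ averages of $\eta$ will be needed. The hypothesis $r < \frac{d + 2}{2}$ is precisely what makes this bookkeeping converge and what pins the endpoint of the induction to $\frac{d^{2} + d + 2(r - 1)}{2(r - 1)}$.
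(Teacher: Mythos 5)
Your skeleton — the base case, considering the $d$-fold power, a Plancherel-type bound, a mixed-norm bound via the induction hypothesis, interpolation, and a fixed-point argument — matches the paper. But there is a genuine gap in the Plancherel step that, if left as stated, invalidates the argument.

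You propose a pure $L^{2}$ estimate for $(M^{*}_{r, \eta, \rho, k}f)^{d}$, with the multiplier factor $\prod_{j}\widehat{\overline{\mathcal{A}^{k}_{\gamma(t_{j})}}}(\vec{\xi}^{k})$ packaged into a ``uniformly controlled'' weight $W(\vec{t})$. This does not work. The multiplier depends nontrivially on $\xi$, so after pairing with a test function $H$ and applying Plancherel, what appears is not a bounded weight but a $\xi$-convolution of $\hat{H}$ against $\bigast_{j}\overline{\mathcal{A}^{k}_{\gamma(x_{j})}}$. By Lemma~\ref{A convolution}, this convolution is dominated pointwise by $M_{r}^{kd}\hat{H}(y)$, and the iterated $r$-maximal operator $M_{r}$ is bounded on $L^{s}$ only for $s > r$. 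Since we are in the regime $r \geq 2$, no $L^{2}$ bound is available; the paper instead proves an $L^{r+\epsilon} \to L^{(r+\epsilon)'}$ estimate (Lemma~\ref{TG L^2}) for a small auxiliary $\epsilon > 0$ with $r+\epsilon < \frac{d+2}{2}$, combining the maximal-function bound with Hausdorff--Young (which requires $r + \epsilon \geq 2$). This is not a cosmetic change: it is precisely the vertex $\big(((r+\epsilon)')^{-1}, ((r+\epsilon)')^{-1}\big)$ in the interpolation that produces the $r$-dependence in the final range $p < \frac{d^{2}+d+2(r-1)}{2(r-1)}$. An honest $L^{2}$ bound would produce a different (and for $r > 2$, incorrect) range.

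On the mixed-norm estimate, your ``tensor products of $\tilde{\eta}$ and $\tilde{\rho}$'' proposal is also not how the paper proceeds, and it is not clear it can be made to work. The paper applies Minkowski's inequality to freeze the shift variable $h'$, then observes that the adjoint of the one-variable slice operator $S_{h}$ satisfies $|S_{h}^{*}H(t)| \leq M_{r}^{k}\hat{H}(\gamma_{h}(t))$ by Lemma~\ref{A convolution}, where $\gamma_{h}(t) = \sum_{j}\gamma(t + h_{j})$ is an affine image of the moment curve; the induction hypothesis (crucially stated for all $k$) then applies directly to bound $S_{h}^{*}$. This is the actual mechanism for importing the inductive bound, and it relies on the same Lemma~\ref{A convolution} that supplies the replacement for your $L^{2}$ step. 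I would encourage you to prove that convolution lemma first: once you have the bound of the $n$-fold convolution of $\mathcal{A}^{k}$-kernels by $M_{r}^{nk}$, both of the estimates you need fall out cleanly, and the necessity of $L^{r+\epsilon}$ over $L^{2}$ becomes transparent.
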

  \section{The Induction Argument} \label{proof}
  %
    % This houses the section Proof of Theorem \ref{max_restrict_thm} for the paper
% Maximal Operators and Fourier Restriction on the Moment Curve.
%
The proof of Lemma~\ref{main theorem second reduction} proceeds by induction.
The base case is  $p = 1$ and $q = \infty$. Here,
\begin{equation*}
  |M_{r, \eta, \rho, k}^{*}f(\xi)|
  = \Big| \int_{\mathbb{R}} e^{2 \pi i \xi \gamma(t)}
  \widehat{\overline{\mathcal{A}^{k}_{\gamma(t)}}}(\vec{\xi}) f(\xi)
  \mathrm{d} t \Big|
  %
  % Intermediate step where the absolute value is brought inside. If this is
  % printed, the environment should be align*.
  %
  % \\
  % &\leq \int_{\mathbb{R}}
  % |\widehat{\overline{\mathcal{A}^{k}_{\gamma(t)}}}(\vec{\xi})| |f(t)|
  % \mathrm{d} t
  %
  \leq \int_{\mathbb{R}} \sup_{x \in \mathbb{R}^{d}}
  ||\mathcal{A}^{k}_{x}||_{L^{1}(\mathbb{R}^{k d})} |f(t)| \mathrm{d} t.
\end{equation*}
By~\eqref{eta assumption},
$||\mathcal{A}^{k}_{x}||_{L^{1}(\mathbb{R}^{d})} \leq 1$ for all $x$. Thus,
\begin{equation*}
  |M_{r, \eta, \rho, k}^{*}f(\xi)|
  \leq \int_{\mathbb{R}} |f(t)| \mathrm{d} t
  = ||f||_{L^{1}(\Gamma)}.
\end{equation*}
This completes the base case. The following lemma, along with a little
arithmetic, establishes the claimed range of $p$ and $q$:
\begin{lemma} \label{induction step}
  Assume for every $1 \leq p < p_{0} < \frac{d^{2} + d + 2 (r - 1)}{2 (r - 1)}$, 
  $q = \frac{d (d + 1)}{2} p'$, there is $C > 0$ such that
  \eqref{main theorem second reduction estimate} holds for all $k$, all
  measurable
  $\rho_{1}, \dots, \rho_{k} \colon \mathbb{R}^{d} \rightarrow \mathbb{R}_{>0}$,
  all measurable
  $\eta \colon \mathbb{R}^{d} \times \mathbb{R}^{d} \rightarrow \mathbb{C}$
  satisfying~\eqref{eta assumption} for every $x \in \mathbb{R}^{d}$, and all
  $f$. Then \eqref{main theorem second reduction estimate}
  holds for all such $\eta$, $\rho$, $k$, and $f$, and for all $p$ satisfying
  \begin{equation*}
    \frac{d}{p} > \frac{2}{(d + 2) p_{0}' (r' - 1)} + \frac{d}{(d + 2) p_{0}},
  \end{equation*}
  and $q = \frac{d (d + 1)}{2} p'$.
\end{lemma}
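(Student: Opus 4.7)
The plan is to run Drury's induction scheme, as indicated by the outline. I raise the target $L^{q}$-norm to the $d$-th power:
\begin{equation*}
  ||M_{r, \eta, \rho, k}^{*} f||_{L^{q}(\mathbb{R}^{d})}^{d}
  = ||(M_{r, \eta, \rho, k}^{*} f)^{d}||_{L^{q/d}(\mathbb{R}^{d})}.
\end{equation*}
Expanding the $d$-th power as a $d$-fold integral in $\vec{t} = (t_{1}, \ldots, t_{d})$ and using symmetry to restrict to the simplex $A = \{t_{1} < \ldots < t_{d}\}$, I apply Drury's substitution $u = \sum_{j = 1}^{d} \gamma(t_{j})$, a bijection from $A$ onto its image with Jacobian equal to the Vandermonde $V(\vec{t}) = \prod_{i < j}(t_{j} - t_{i})$. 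Under this substitution $(M_{r, \eta, \rho, k}^{*} f(\xi))^{d}$ becomes a Fourier-type integral in $\xi$, except that the kernel factors $\widehat{\overline{\mathcal{A}^{k}_{\gamma(t_{j})}}}(\vec\xi^{k})$ contribute additional $\xi$-dependence beyond the plane wave $e^{2 \pi i \xi u}$.

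The first bound is a mixed-norm estimate for $(M^{*}f)^{d}$ coming from the induction hypothesis. Writing each kernel $\widehat{\overline{\mathcal{A}^{k}_{\gamma(t_{j})}}}(\vec\xi^{k})$ as an integral in auxiliary $z_{j}$-variables and collecting all exponentials, one of the $d$ factors together with the Vandermonde weight $1/V$ can be repackaged as the action of another operator $M_{r, \tilde\eta, \tilde\rho, \tilde{k}}^{*}$ on $f$, where $\tilde\eta$, $\tilde\rho$, and $\tilde{k}$ are constructed from the remaining nested averages and a suitably distributed portion of the Vandermonde. The pivotal check is that this $\tilde\eta$ still satisfies \eqref{eta assumption}; this requires distributing $1/V$ across the $d$ factors of $f$ via H\"older's inequality in a way compatible with the $L^{r'}$-normalisation constraint. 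Applying the induction hypothesis at $p_{0}$ then delivers a bound on $(M^{*}f)^{d}$ at one endpoint.

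For the other endpoint I use Plancherel. After the manipulations above, $(M^{*}f(\xi))^{d}$ is, up to symmetry factors, the Fourier transform in $\xi$ of a function $H$ on $\mathbb{R}^{d}$ built from $\prod_{j} f(t_{j}(u))$, the nested averages $\overline{\mathcal{A}^{k}_{\gamma(t_{j})}}$, and $1/V$. Plancherel identifies $||(M^{*}f)^{d}||_{L^{2}(\mathbb{R}^{d})}^{2}$ with $||H||_{L^{2}(\mathbb{R}^{d})}^{2}$; H\"older's inequality together with \eqref{eta assumption} absorbs the $\mathcal{A}^{k}$-factors, and a final H\"older or Schur-type argument in $\vec{t}$ reduces the bound on $||H||_{L^{2}}^{2}$ to $||f||_{L^{p_{\ast}}}^{2 d}$ for an explicit $p_{\ast}$ depending only on $r$ and $d$. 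Interpolating this endpoint with the induction-hypothesis endpoint, and performing the arithmetic on the resulting exponents using the identity $r' - 1 = 1/(r - 1)$, reproduces the claimed condition $\frac{d}{p} > \frac{2}{(d + 2) p_{0}'(r' - 1)} + \frac{d}{(d + 2) p_{0}}$.

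The main obstacle is the construction of $\tilde\eta$ in the mixed-norm step: cleanly absorbing the non-separable Vandermonde weight $1/V$ into a single $\tilde\eta$ that still satisfies the nested $L^{r'}$-average constraint \eqref{eta assumption}. This coupling between the maximal parameter $r$ and the moment-curve geometry is what dictates the appearance of $(r' - 1)^{-1}$ in the bound on $1/p$ and, through the interpolation, the exact range of new $p$ produced at each step of the induction.
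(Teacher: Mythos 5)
Your high-level architecture (raise $M^{*}_{r,\eta,\rho,k}f$ to the $d$-th power, change variables via the Drury/Vandermonde substitution, and interpolate a Plancherel-type endpoint against an induction-hypothesis endpoint) matches the paper's, but two of the key mechanisms you propose are wrong or missing, and you flag the critical one as unresolved.

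First, your proposed ``Plancherel endpoint'' at $L^{2}$ does not exist here. The operator $T$ is not the Fourier transform of a single function $H$: the kernel $\prod_{j}\widehat{\overline{\mathcal{A}^{k}_{\gamma(x_{j})}}}(\vec\xi^{k})$ carries genuine $\xi$-dependence beyond the plane wave, so one must dualize, pass Plancherel through the $\xi$-integral, and then control the resulting $d$-fold convolution of the $\mathcal{A}^{k}_{z}$-kernels acting on $\hat H$. The paper isolates this in Lemma~\ref{A convolution}, which shows by induction and H\"older that this convolution is pointwise dominated by $M_{r}^{kd}\hat H$. After that, one needs $M_{r}$ bounded on the ambient Lebesgue space, which forces the exponent to be strictly larger than $r$; since $r\geq 2$, the endpoint is $L^{(r+\epsilon)'}\to L^{r+\epsilon}$ with $r+\epsilon>2$, not $L^{2}\to L^{2}$. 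A genuine $L^{2}$ Plancherel identity would require $M_{r}$ bounded on $L^{2}$, i.e.\ $r<2$, which contradicts the hypothesis. The ``Schur-type argument in $\vec t$'' you invoke plays no role; the mechanism is Lemma~\ref{A convolution} plus boundedness of $M_{r}$ on $L^{r+\epsilon}$ plus Hausdorff--Young, yielding a mixed-norm bound on $\tilde G$ (with $|v(h)|$-weight), not a direct bound in terms of $\|f\|_{L^{p_{*}}}$.

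Second, the other endpoint is not obtained by ``repackaging'' $1/V$ into a modified $\tilde\eta$ satisfying \eqref{eta assumption} --- indeed, you identify this as the main obstacle and do not resolve it, and it is not clear it can be resolved that way. The paper sidesteps the issue entirely: after changing variables to $(t,h)$ with $h_{1}=0$, Minkowski's integral inequality moves the $h'$-integration outside the $L^{q}_{\xi}$-norm, and for each fixed $h$ the operator $S_{h}$ is exactly the adjoint maximal restriction operator for the affine curve $\gamma_{h}$, to which the induction hypothesis applies verbatim (the hypothesis is stated for all $\eta,\rho,k$, and affine images of $\Gamma$ inherit the same estimate). The Vandermonde weight then appears only as the weight in the mixed norm $L^{1}_{h'}(L^{p}_{t};|v(h)|)$ and is absorbed afterward, at the interpolation stage, by the Lorentz-space fact $v(0,h')^{-1}\in L^{d/2,\infty}_{h'}$ and a H\"older inequality in $h'$. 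This is the step that produces the numerology $\frac{2}{(d+2)p_{0}'(r'-1)}+\frac{d}{(d+2)p_{0}}$, not a redistribution of $1/V$ inside $\tilde\eta$. Without Lemma~\ref{A convolution}, the correct $L^{r+\epsilon}$ (rather than $L^{2}$) endpoint, and the Minkowski-plus-Lorentz treatment of the Vandermonde, the proposal does not constitute a proof.
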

To prove Lemma~\ref{induction step}, we adapt Drury's argument in~\cite{Drury}.
Thus, we rewrite the left-hand side
of~\eqref{main theorem second reduction estimate} as
\begin{equation*}
  ||M_{r, \eta, \rho, k}^{*}f||_{L^{q}(\mathbb{R}^{d})} = 
  ||(M_{r, \eta, \rho, k}^{*}f)^{d}||_{L^{q / d}(\mathbb{R}^{d})}^{1 / d}.
\end{equation*}
Expanding gives
\begin{equation*}
  (M_{r, \eta, \rho, k}^{*}f)^{d}(\xi) = \int_{\mathbb{R}^{d}}
  e^{2 \pi i \xi \sum_{j = 1}^{d} \gamma(x_{j})} \prod_{j = 1}^{d}
  \widehat{\overline{A_{\gamma(x_{j})}}}(\vec{\xi}^{k}) f(x_{j}) \mathrm{d} x.
\end{equation*}
Define
\begin{equation} \label{TG}
  TG(\xi) = \int_{\Gamma + \dots + \Gamma} e^{2 \pi i \xi y} 
  \prod_{j = 1}^{d}
  \widehat{\overline{\mathcal{A}^{k}_{\gamma(x_{j})}}}(\vec{\xi}^{k})
  G(y) \mathrm{d} y,
\end{equation}
where $x = x(y)$ is uniquely determined by $x_{1} < x_{2} < \dots < x_{d}$ and
$y = \sum_{j = 1}^{d} \gamma(x_{j})$. Let $v$ be the Vandermonde determinant,
\begin{equation*}
  |v(x)| = \prod_{1 \leq i < j \leq d} |x_{i} - x_{j}|.
\end{equation*}
Then with the choice
\begin{equation*}
  G(y) = \prod_{j = 1}^{d} f(x_{j}) |v(x)|^{-1},
\end{equation*}
we have
\begin{equation} \label{TG = M^d}
  TG(\xi) = \frac{1}{d!} (M_{r, \eta, \rho, k}^{*}f)^{d}(\xi).
\end{equation}
To apply the induction hypothesis, we will need to work with another change of
variables.  For $h = (h_{1}, h')$ with $h_{1} = 0 < h_{2} < \dots < h_{d}$, set
$x_{j} = t + h_{j}$ and \(\gamma_{h}(t) = \sum_{j = 1}^{d} \gamma(x_{j})\).
Define the auxiliary function
\begin{equation} \label{Tilde G}
  \Tilde{G}(t, h)
  = G(\gamma_{h}(t))
  = \prod_{j = 1}^{d} f(t + h_{j}) |v(h)|^{-1}.
\end{equation}
Finally, fix $0 < \epsilon < \frac{d + 2}{2} - r.$
\begin{lemma} \label{TG L^2}
  For $T$ defined as in~\eqref{TG} and $r + \epsilon < \frac{d + 2}{2}$, we
  have
  \begin{equation} \label{TG L^2 bound}
    ||TG||_{L^{r + \epsilon}} \leq C
    ||\Tilde{G}||_{L^{(r + \epsilon)'}_{h'}(L^{(r + \epsilon)'}_{t}; |v(h)|)}.
  \end{equation}
\end{lemma}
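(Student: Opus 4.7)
The strategy is to recognize $TG$ as an inverse Fourier transform and apply Hausdorff-Young. First I expand $\widehat{\overline{\mathcal{A}^k_x}}(\vec{\xi}^k)$: changing variables $y_j = x - z_1 - \ldots - z_j$ in its defining integral replaces the product of normalized characteristic functions by an iterated averaging density $K_x$ (a probability density on $\mathbb{R}^d$) and factors out the phase at $x$, so that
\[
\widehat{\overline{\mathcal{A}^k_x}}(\vec{\xi}^k) = e^{-2\pi i \xi \cdot x}\, \check\phi_x(\xi),\qquad \phi_x(y) := \overline{\eta(x, y)}\, K_x(y),
\]
where $\check\phi_x(\xi) := \int \phi_x(y)\, e^{2\pi i \xi \cdot y}\, dy$. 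Substituting $x = \gamma(x_j(y))$ and using $\sum_j \gamma(x_j(y)) = y$, the exponentials combine to $e^{-2\pi i \xi \cdot y}$, which cancels the factor $e^{2\pi i \xi \cdot y}$ in \eqref{TG}. Applying $\check\phi_{x_1}\cdots\check\phi_{x_d} = \check{(\phi_{x_1} * \ldots * \phi_{x_d})}$ and swapping the $y$-integral with the Fourier inverse, one obtains $TG = \check F$ where
\[
F(w) = \int_{\Gamma + \ldots + \Gamma} \Psi_y(w)\, G(y)\, dy,\qquad \Psi_y := \phi_{\gamma(x_1(y))} * \ldots * \phi_{\gamma(x_d(y))}.
\]

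Since $r \geq 2$ forces $(r+\epsilon)' \leq 2 \leq r+\epsilon$, Hausdorff-Young yields $||TG||_{L^{r+\epsilon}} \leq ||F||_{L^{(r+\epsilon)'}}$. Writing $q := (r+\epsilon)'$ and changing variables $y = \gamma_h(t)$ (with $dy = |v(h)|\,dt\,dh'$), the weighted mixed-norm on the right-hand side of \eqref{TG L^2 bound} is exactly $||G||_{L^q(\Gamma + \ldots + \Gamma,\, dy)}$. It therefore suffices to show the operator $G \mapsto F$ is bounded on $L^q$.

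I would obtain this by a Schur-type argument. Splitting $|\Psi_y(w)| = |\Psi_y(w)|^{1/q'}\, |\Psi_y(w)|^{1/q}$ and applying H\"older in $y$ gives
\[
|F(w)|^q \leq A(w)^{q/q'} \int |\Psi_y(w)|\, |G(y)|^q\, dy,\qquad A(w) := \int |\Psi_y(w)|\, dy.
\]
Integrating in $w$, applying Fubini, and using $||\Psi_y||_{L^1} \leq 1$ produces $||F||_{L^q}^q \leq ||A||_{L^\infty}^{q/q'}\, ||G||_{L^q}^q$. The bound $||\Psi_y||_{L^1} \leq 1$ follows from Young's convolution inequality together with $||\phi_x||_{L^1} \leq 1$, which is itself a consequence of \eqref{eta assumption}: H\"older applied to $\int |\eta(x, \cdot)|\, K_x\, dy$ with exponents $r'$ and $r$ yields $||\phi_x||_{L^1} \leq 1$.

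The main obstacle is the Schur condition $||A||_{L^\infty} \leq C$, with $C$ independent of $\eta$ and the $\rho_j$. Heuristically this is a dual partition-of-unity estimate: $\Psi_y$ is an $L^1$-normalized density concentrated near $y$, and $y$ ranges over $\Gamma + \ldots + \Gamma = \mathbb{R}^d$. To make this rigorous, I would change variables from $y$ to $(x_1, \ldots, x_d)$ (with Jacobian $|v(x)|$) and interchange this integration with the convolution variables defining $\Psi_y(w)$; the Vandermonde factor then plays the role of the Jacobian for the reverse change back to $w$-coordinates, while the $L^1$-normalization of each $K_{\gamma(x_j)}$ yields the uniform constant. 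This is where the specific non-degeneracy of the moment curve enters.
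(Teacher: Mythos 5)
Your algebraic reduction is sound: after the change of variables $y_j = x - z_1 - \cdots - z_j$, one does get $\widehat{\overline{\mathcal{A}^k_x}}(\vec{\xi}^k) = e^{-2\pi i \xi\cdot x}\,\check\phi_x(\xi)$, the phases cancel against $e^{2\pi i\xi\cdot y}$, and $TG = \check F$ with $F(w)=\int \Psi_y(w)G(y)\,dy$. The Hausdorff--Young step and the identification of the weighted mixed norm with $\|G\|_{L^{(r+\epsilon)'}(dy)}$ are also fine. The problem is the Schur test, which you yourself flag as the main obstacle; it genuinely fails here, and this is where your proof diverges fatally from the paper's.

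The Schur condition $\sup_w \int |\Psi_y(w)|\,dy \le C$ is, for nonnegative kernels (e.g.\ $\eta\equiv 1$), precisely the $L^\infty\!\to L^\infty$ bound for $G\mapsto F$, and combined with the $L^1$ bound it would give boundedness on every $L^q$ with a $q$-independent (and in particular $\epsilon$-independent) constant. But the radii $\rho_j$ are allowed to be arbitrary measurable functions of position, and no such uniform geometric bound is available. Already in the toy model $d=1$, $k=1$, $\eta\equiv 1$, $\phi_y = \frac{1}{2\rho(y)}\chi_{[y-\rho(y),\,y+\rho(y)]}$, the choice $\rho(y)=2|y|$ gives $\int \phi_y(0)\,dy = \int \frac{1}{4|y|}\chi_{\{|y|<2|y|\}}\,dy = \infty$, so the Schur integral is unbounded even though the underlying operator (a Hardy-type average) is still bounded on $L^q$ for $1<q<\infty$. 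The same obstruction --- wildly varying scales --- persists for the moment curve: the uniform-in-$\rho$ constant you need at the point ``$\|A\|_{L^\infty}\le C$'' simply does not exist, and the Vandermonde change of variables you gesture at only controls the Jacobian, not the $y$-dependence of the scales $\rho_j(\gamma(x_j(y)))$.

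The paper avoids this entirely: it bounds $|\langle TG,H\rangle|\le \int|G(y)|\,M_r^{kd}\hat H(y)\,dy$ via Lemma~\ref{A convolution}, then applies H\"older and the strong $L^{r+\epsilon}$-boundedness of the Hardy--Littlewood maximal operator $M_r$, which requires $r+\epsilon>r$, i.e.\ $\epsilon>0$. That is exactly the device that absorbs the arbitrary, position-dependent averaging scales; it is a genuinely weaker (and correct) substitute for the Schur $L^\infty$ bound, and it is the reason $\epsilon$ appears in the statement of the lemma at all. Your argument, if correct, would give the endpoint $\epsilon=0$ for free, which is a strong indication that the Schur condition is too much to ask. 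The fix is not cosmetic: you need to replace the Schur test with a maximal-function estimate of the type in Lemma~\ref{A convolution}, at which point you are essentially reproducing the paper's proof.
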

\begin{proof}
  For any test function $H$, by~\eqref{TG} we have
  \begin{equation*}
    |\langle TG, H \rangle|
    = \bigg| \int_{\mathbb{R}^{d}} TG(\xi) \overline{H}(\xi) \mathrm{d} \xi
    \bigg|
    = \bigg| \int_{\mathbb{R}^{d}} \int_{\Gamma + \dots + \Gamma}
    e^{2 \pi i \xi y}
    \prod_{j = 1}^{d}
    \widehat{\overline{\mathcal{A}^{k}_{\gamma(x_{j})}}}(\vec{\xi}^{k})
    G(y) \overline{H}(\xi) \mathrm{d} y \mathrm{d} \xi \bigg|.
  \end{equation*}
  %
  % Intermediate step: undo the Fourier transform on \mathcal{B}.
  %
  % \begin{equation*}
  % %
  %   |\langle TG, H \rangle|
  %   = \bigg| \int_{\mathbb{R}^{d}} G(y) \int_{\mathbb{R}^{d}}
  %   \int_{\mathbb{R}^{k d}} e^{2 \pi i \xi (y - w_{1} - \ldots - w_{k})}
  %   \overline{\mathcal{B}_{y}}(w_{1}, \ldots, w_{k}) \overline{H}(\xi)
  %   \mathrm{d} w_{1} \ldots \mathrm{d} w_{k} \mathrm{d} \xi \mathrm{d} y
  %   \bigg|.
  % %
  % \end{equation*}
  %
  Changing the order of integration and applying Plancherel in $\xi$,
  \begin{equation*}
    |\langle TG, H \rangle|
    = \bigg| \int_{\Gamma + \dots + \Gamma} G(y) \int_{\mathbb{R}^{k d}}
    \bigast_{j = 1}^{d}
    \overline{\mathcal{A}^{k}_{\gamma(x_{j})}(w_{1}, \ldots, w_{k})
    \widehat{H}(y - w_{1} - \ldots - w_{k})}
    \mathrm{d} w_{1} \ldots \mathrm{d} w_{k} \mathrm{d} y \bigg|.
  \end{equation*}
  We can now apply the following lemma, which we will prove shortly.
  \begin{lemma} \label{A convolution}
    Let $\mathcal{A}^{k}_{z}(w_{1}, \ldots, w_{k})$ be defined as
    in~\eqref{A operator}, and let $\hat{H}$ be a test function. Then for each
    $n, k \in \mathbb{N}$ and $z_{1}, \ldots, z_{n}$,
    \begin{equation} \label{A convolution bound}
      \int_{\mathbb{R}^{k d}} |(\mathcal{A}^{k}_{z_{1}} \ast \ldots \ast
      \mathcal{A}^{k}_{z_{n}})(w_{1}, \ldots, w_{k})
      \hat{H}(y - w_{1} - \ldots - w_{k})|
      \mathrm{d} w_{1} \ldots \mathrm{d} w_{k} \leq M_{r}^{n k} \hat{H}(y).
    \end{equation}
  \end{lemma}
  Using this lemma, we see that
  \begin{equation*}
    |\langle TG, H \rangle|
    \leq \int_{\Gamma + \dots + \Gamma} |G(y)| M_{r}^{k d} \hat{H}(y)
    \mathrm{d} y.
  \end{equation*}
  By H\"older's inequality, we obtain
  \begin{equation*}
    |\langle TG, H \rangle|
    \leq ||G||_{(r + \epsilon)'} ||M_{r}^{k d}\hat{H}||_{r + \epsilon}.
  \end{equation*}
  Since $r < r + \epsilon$, we have
  \begin{equation*}
    |\langle TG, H \rangle|
    \leq ||G||_{(r + \epsilon)'} ||\hat{H}||_{r + \epsilon}.
  \end{equation*}
  Finally, $r + \epsilon > 2$, so by Hausdorff-Young,
  \begin{equation*}
    |\langle TG, H \rangle|
    \leq ||G||_{(r + \epsilon)'} ||H||_{(r + \epsilon)'}.
  \end{equation*}
  Thus, TG is bounded from $L^{(r + \epsilon)'}$ to $L^{r + \epsilon}$, which,
  along with a change of variables, proves the lemma.
\end{proof}
Now we prove Lemma~\ref{A convolution}.
\begin{proof}
  Fix $k \geq 1$. We proceed by induction. The base case is $n = 1$. In this
  case, the left-hand side of~\eqref{A convolution bound} is
  \begin{align*}
    \int_{\mathbb{R}^{k d}} &|\hat{H}(y - w_{1} - \ldots - w_{k})
    \eta(z, z - w_{1} - \ldots - w_{k})
    \\
    &\cdot \chi_{\rho_{1}(z)}(w_{1}) \ldots
    \chi_{\rho_{k}(z - w_{1} - \ldots - w_{k - 1})}(w_{k})|
    \mathrm{d} w_{1} \ldots \mathrm{d} w_{k}.
  \end{align*}
  Applying H\"older's inequality, this is bounded by
  \begin{align*}
    &\bigg(\int_{\mathbb{R}^{k d}} |\hat{H}(y - w_{1} - \ldots - w_{k})|^{r}
    \big|\chi_{\rho_{1}(z)}(w_{1})\big| \ldots
    \big|\chi_{\rho_{k}(z - w_{1} - \ldots - w_{k - 1})}(w_{k})\big|
    \mathrm{d} w_{1} \ldots \mathrm{d} w_{k}\bigg)^{\frac{1}{r}}
    \\
    &\cdot \bigg(\int_{\mathbb{R}^{k d}}
    |\eta(z, z - w_{1} - \ldots - w_{k})|^{r'}
    \big|\chi_{\rho_{1}(z)}(w_{1})\big| \ldots
    \big|\chi_{\rho_{k}(z - w_{1} - \ldots - w_{k - 1})}(w_{k})\big|
    \mathrm{d} w_{1} \ldots \mathrm{d} w_{k}\bigg)^{\frac{1}{r'}}.
  \end{align*}
  Changing variables in each integral transforms the above into
  \begin{align*}
    &\bigg(\ \dashint_{B_{\rho_{1}(z)}(y)} \ldots 
    \dashint_{B_{\rho_{k}(v_{k - 1} + z - y)}(v_{k - 1})} |\hat{H}(v_{k})|^{r} 
    \mathrm{d} v_{k} \ldots \mathrm{d} v_{1}\bigg)^{\frac{1}{r}}
    \\
    &\cdot \bigg(\ \dashint_{B_{\rho_{1}(z)}(z)} \ldots 
    \dashint_{B_{\rho_{k}(z - v_{k - 1})}(v_{k - 1})} |\eta(z, v_{k})|^{r'}
    \mathrm{d} v_{k} \ldots \mathrm{d} v_{1}\bigg)^{\frac{1}{r'}}
  \end{align*}
  By~\eqref{eta assumption}, the second term is bounded by 1. Moreover, the
  first term is bounded by $M_{r}^{k} \hat{H}(y)$, so the base case is done.
  Now, assume we have \eqref{A convolution bound} for some $n$ and all
  functions $\hat{H}$. We want to bound
  \begin{equation} \label{A convolution n + 1}
    \int_{\mathbb{R}^{k d}} |(\mathcal{A}^{k}_{z_{1}} \ast \ldots \ast 
    \mathcal{A}^{k}_{z_{n + 1}})(w_{1}, \ldots, w_{k}) 
    \hat{H}(y - w_{1} - \ldots - w_{k})|
    \mathrm{d} w_{1} \ldots \mathrm{d} w_{k},
  \end{equation}
  with the convolution performed $n + 1$ times. Split up the convolution as the
  convolution of an $n$-fold convolution with $A_{z_{n + 1}}$ to
  rewrite~\eqref{A convolution n + 1} as
  \begin{equation*}
    \int_{\mathbb{R}^{k d}} |(\mathcal{A}^{k}_{z_{1}} \ast \ldots
    \ast \mathcal{A}^{k}_{z_{n}}) \ast
    \mathcal{A}^{k}_{z_{n + 1}}(w_{1}, \ldots, w_{k})
    \hat{H}(y - w_{1} - \ldots - w_{k})|
    \mathrm{d} w_{1} \ldots \mathrm{d} w_{k}.
  \end{equation*}
  Expanding this convolution, we can further
  rewrite~\eqref{A convolution n + 1} as
  \begin{align*}
    \int_{\mathbb{R}^{k d}} \int_{\mathbb{R}^{k d}}
    &|(\mathcal{A}^{k}_{z_{1}} \ast \ldots \ast \mathcal{A}^{k}_{z_{n}})
    (v_{1}, \ldots, v_{k}) 
    \mathcal{A}^{k}_{z_{n + 1}}(w_{1} - v_{1}, \ldots, w_{k} - v_{k})
    \\
    &\cdot \hat{H}(y - w_{1} - \ldots - w_{k})| \mathrm{d} v_{1} \ldots
    \mathrm{d} v_{k} \mathrm{d} w_{1} \ldots \mathrm{d} w_{k}.
  \end{align*}
  With the change of variables
  $u_{j} = w_{j} - v_{j}$,~\eqref{A convolution n + 1} becomes
  \begin{align*}
    \int_{\mathbb{R}^{k d}} \int_{\mathbb{R}^{k d}}
    &|(\mathcal{A}^{k}_{z_{1}} \ast \ldots \ast \mathcal{A}^{k}_{z_{n}})
    (v_{1}, \ldots, v_{k}) \mathcal{A}^{k}_{z_{n + 1}}(u_{1}, \ldots, u_{k})
    \\
    &\cdot \hat{H}(y - v_{1} - \ldots - v_{k} - u_{1} - \ldots - u_{k})|
    \mathrm{d} v_{1} \ldots \mathrm{d} v_{k} \mathrm{d} u_{1} \ldots
    \mathrm{d} u_{k}.
  \end{align*}
  By the induction hypothesis, the above is bounded by
  \begin{equation*}
    \int_{\mathbb{R}^{k d}} |\mathcal{A}^{k}_{z_{n + 1}}(v_{1}, \ldots, v_{k})
    M_{r}^{n k}\hat{H}(y - v_{1} - \ldots - v_{k})|
    \mathrm{d} v_{1} \ldots \mathrm{d} v_{k}.
  \end{equation*}
  Finally, another application of the induction hypothesis shows
  that~\eqref{A convolution n + 1} is bounded by $M_{r}^{(n + 1) k}\hat{H}(y)$.
\end{proof}
\begin{lemma}
  For $T$ defined as in~\eqref{TG}, there is a constant $C_{p, r}$ such that
  \begin{equation} \label{TG L^p bound}
    ||TG||_{L^{q}} \leq C_{p, r} ||\Tilde{G}||_{L^{1}_{h'}(L^{p}_{t}; |v(h)|)}.
  \end{equation}
\end{lemma}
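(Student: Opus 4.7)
The plan is to follow the template of the proof of Lemma~\ref{TG L^2}: dualize, use Plancherel to convert the $\xi$-integral into a convolution of the $\overline{\mathcal{A}^{k}_{\gamma(x_{j})}}$'s, and apply Lemma~\ref{A convolution} to dominate by a maximal function. The only difference is that whereas Lemma~\ref{TG L^2} closes the argument with Hausdorff--Young, here I would invoke the induction hypothesis instead. Concretely, I would start from $\|TG\|_{L^{q}} = \sup_{\|H\|_{L^{q'}} = 1} |\langle TG, H \rangle|$, expand using~\eqref{TG}, interchange the order of integration, and apply Plancherel in $\xi$ to rewrite the $\xi$-integral as a $d$-fold convolution (in $\mathbb{R}^{k d}$) of the $\overline{\mathcal{A}^{k}_{\gamma(x_{j})}}$'s paired with $\widehat{H}(y - w_{1} - \cdots - w_{k})$. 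Taking absolute values and invoking Lemma~\ref{A convolution} with $n = d$ and $z_{j} = \gamma(x_{j})$ then yields
\[
|\langle TG, H \rangle| \leq \int_{\Gamma + \cdots + \Gamma} |G(y)|\, M_{r}^{d k}\hat{H}(y)\, \mathrm{d} y.
\]

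Next I would change variables $y = \gamma_{h}(t)$ with Jacobian $|v(h)|$, so that the right side becomes $\int |\tilde{G}(t, h)|\, M_{r}^{d k} \hat{H}(\gamma_{h}(t))\, |v(h)|\, \mathrm{d} t\, \mathrm{d} h'$. Applying H\"older's inequality in $t$ with exponents $(p, p')$ yields
\[
|\langle TG, H \rangle| \leq \int |v(h)|\, \|\tilde{G}(\cdot, h)\|_{L^{p}_{t}}\, \|M_{r}^{d k}\hat{H} \circ \gamma_{h}\|_{L^{p'}_{t}}\, \mathrm{d} h'.
\]
It then suffices to establish the uniform-in-$h$ bound $\|M_{r}^{d k}\hat{H} \circ \gamma_{h}\|_{L^{p'}_{t}} \leq C \|H\|_{L^{q'}}$; taking the supremum over $\|H\|_{L^{q'}} = 1$ afterwards gives the conclusion.

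The hard part is proving this uniform bound. By the induction hypothesis applied with $k$ replaced by $d k$ and dualized via the linearization of Section~\ref{linearization}, the analogous bound with the moment curve $\gamma$ in place of $\gamma_{h}$ is known. To transfer the estimate to $\gamma_{h}$, I observe that $\gamma_{h}(t) = A_{h} \gamma(t) + b_{h}$ is an affine image of $\gamma$ with the crucial feature $|\det A_{h}| = d^{d}$, a constant independent of $h$. The main obstacle is that the individual singular values of $A_{h}$ vary with $h$, so under the change of variable $\xi \mapsto A_{h}^{T} \xi$ the Euclidean balls defining $M_{r}$ become ellipsoids of $h$-dependent eccentricity. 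My strategy would be to retain the sharper linearized form of the bound from Lemma~\ref{A convolution}---namely that the inner integral is controlled by an iterated average $M_{r, \eta', \rho', d k}\hat{H}(\gamma_{h}(t))$ with $(\eta', \rho')$ encoded by the $d$ chains $\mathcal{A}^{k}_{\gamma(t + h_{j})}$---and to check that after the affine change of variable this becomes a valid linearization on $\gamma$ whose amplitudes still satisfy~\eqref{eta assumption}. The induction hypothesis then produces the required $h$-uniform bound (with a constant depending only on $|\det A_{h}| = d^{d}$), and combining everything above gives $|\langle TG, H \rangle| \leq C \|H\|_{L^{q'}} \|\tilde{G}\|_{L^{1}_{h'}(L^{p}_{t}; |v(h)|)}$, as desired.
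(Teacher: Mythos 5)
Your proposal is, after accounting for duality, essentially the same argument as the paper's; the two differ only in the order of operations. The paper first applies Minkowski's integral inequality to pull the $h'$-integration outside the $L^{q}_{\xi}$-norm, reducing to a uniform-in-$h$ bound for a family of linear operators $S_{h}$; it then bounds $S_{h}^{*}H(t)$ pointwise by $M_{r}^{dk}\hat{H}(\gamma_{h}(t))$ via Lemma~\ref{A convolution} and appeals to the induction hypothesis plus affine invariance of $\gamma_{h}$, finally plugging in $F(t) = \tilde{G}(t, h)v(h)$ and integrating in $h'$. You instead dualize $TG$ against $H$ globally (repeating the Plancherel plus Lemma~\ref{A convolution} steps from the proof of Lemma~\ref{TG L^2}), change variables $y = \gamma_{h}(t)$ with Jacobian $|v(h)|$, apply H\"older in $t$, and then need the uniform-in-$h$ bound $\|M_{r}^{dk}\hat{H}\circ\gamma_{h}\|_{L^{p'}_{t}} \leq C\|H\|_{L^{q'}}$ — which is exactly the dual formulation of the paper's bound $\|S_{h}F\|_{L^{q}}\leq C\|F\|_{L^{p}}$. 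Minkowski-then-adjoint and dualize-then-H\"older are the same argument read in opposite directions, so nothing genuinely new is gained or lost.

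Two smaller remarks. First, you are right that the exponent should be $M_{r}^{dk}$ (the paper's proof writes $M_{r}^{k}$, which appears to be a typographical slip; Lemma~\ref{A convolution} with $n = d$ gives $M_{r}^{dk}$, and since the induction hypothesis is uniform in the iteration count this does not affect the conclusion). Second, you correctly flag that the affine-invariance step is delicate: $M_{r}$ is built from Euclidean balls, so conjugating by $A_{h}$ produces maximal averages over $h$-dependent ellipsoids, and $|\det A_{h}| = d^{d}$ alone does not control their eccentricity. The paper passes over this in one sentence (``Since each $\gamma_{h}$ is an affine transformation of the original curve, the induction hypothesis yields\ldots''), so you are being at least as careful as the source. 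Your proposed repair — carrying along the linearized form and checking that the pushed-forward weights still satisfy~\eqref{eta assumption} — is in the right spirit but is not quite complete as stated, since after the change of variables the $\chi_{\rho_{i}}$ become normalized indicators of ellipsoids rather than balls, which falls outside the class~\eqref{A operator} as literally defined; one would need to enlarge the class of linearizations (e.g.\ to normalized indicators of arbitrary convex bodies, or to set up the induction for all nondegenerate polynomial curves with uniform constants) to make this airtight. That said, this is a gap shared with the paper's own exposition, not one you have introduced.
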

\begin{proof}
  By Minkowski's inequality for integrals,
  \begin{align*}
    ||TG||_{L^{q}}
    &= \bigg| \bigg| \int_{0}^{\infty} \int_{h_{2}}^{\infty} \dots
    \int_{h_{d - 1}}^{\infty} \int_{\mathbb{R}}
    e^{2 \pi i \xi \gamma_{h}(t)}
    \prod_{j = 1}^{d}
    \widehat{\overline{\mathcal{A}^{k}_{\gamma(t + h_{j})}}}(\vec{\xi}^{k})
    \Tilde{G}(t, h) v(h) \mathrm{d} t \mathrm{d} h' \bigg| \bigg|_{L^{q}_{\xi}}
    \\
    &\leq \bigg| \bigg| \int_{\mathbb{R}} e^{2 \pi i \xi \gamma_{h}(t)}
    \prod_{j = 1}^{d}
    \widehat{\overline{\mathcal{A}^{k}_{\gamma(t + h_{j})}}} (\vec{\xi}^{k})
    \Tilde{G}(t, h) v(h) \mathrm{d} t \bigg| \bigg|_{L^{1}_{h} L^{q}_{\xi}}.
  \end{align*}
  Define the operator
  \begin{equation*}
    S_{h}F(\xi) = \int_{\mathbb{R}} e^{2 \pi i \xi \gamma_{h}(t)}
    \prod_{j = 1}^{d}
    \widehat{\overline{\mathcal{A}^{k}_{\gamma(t + h_{j})}}}(\vec{\xi}^{k}) F(t)
    \mathrm{d} t,
  \end{equation*}
  whose adjoint is given by
  \begin{align*}
    S_{h}^{*} H(t)
    &= \int_{\mathbb{R}^{d}} e^{-2 \pi i \xi \gamma_{h}(t)}
    \prod_{j = 1}^{d}
    \widecheck{\mathcal{A}^{k}_{\gamma_(t + h_{j})}}(\vec{\xi}^{k}) H(\xi)
    \mathrm{d} \xi
    \\
    &= \int_{\mathbb{R}^{k d}} \bigast_{j = 1}^{d}
    \mathcal{A}^{k}_{\gamma(t + h_{j})}(w_{1}, \ldots, w_{k})
    \hat{H}(\gamma_{h}(t) - w_{1} - \ldots - w_{k})
    \mathrm{d} w_{1} \ldots \mathrm{d} w_{k}.
  \end{align*}
  Using Lemma~\ref{A convolution}, we obtain the bound
  \begin{equation*}
    |S_{h}^{*} H(t)|
    \leq M_{r}^{k} \hat{H}(\gamma_{h}(t)).
  \end{equation*}
  Since each $\gamma_{h}$ is an affine transformation of the original curve,
  the induction hypothesis yields
  \begin{equation*}
    ||S_{h}^{*}H||_{L^{p'}} \leq C_{p, r} ||H||_{L^{q'}}.
  \end{equation*}
  Hence, we have
  \begin{equation*}
    ||S_{h}F||_{L^{q}_{\xi}} \leq C_{p, r} ||F||_{L^{p}}.
  \end{equation*}
  Setting $F(t) = \Tilde{G}(t, h) v(h)$ for each $h$ and integrating in $h'$
  finishes the proof.
\end{proof}
By interpolating \eqref{TG L^2 bound} and \eqref{TG L^p bound}, we obtain
\begin{equation} \label{TG interpolation bound}
  ||TG||_{L^{c}} \leq C_{a, b, r} ||\Tilde{G}||_{L^{a}_{h'}(L^{b}_{t}; |v(h)|)}
\end{equation}
for all $(a^{-1}, b^{-1})$ in the triangle with vertices $(1, 1)$,
$(1, p_{0}^{-1})$, and $(((r + \epsilon)')^{-1}, ((r + \epsilon)')^{-1})$,
with $c$ satisfying
\begin{equation*}
  \frac{(d + 2) (d - 1)}{2} a^{-1} + b^{-1} + 
  \frac{d (d + 1)}{2} c^{-1} = \frac{d (d + 1)}{2}.
\end{equation*}
Expanding out $\Tilde{G}$ using \eqref{Tilde G}, we see that
\begin{equation*}
  ||\Tilde{G}||_{L^{a}_{h'}(L^{b}_{t}; |v(h)|)} = \bigg(\int_{\mathbb{R}} 
  |v(h)|^{-(a - 1)} \bigg(\int_{\mathbb{R}^{d - 1}} 
  |f(t + h_{1}) \ldots f(t + h_{d})|^{b} \mathrm{d} t\bigg)^{\frac{a}{b}}
  \mathrm{d} h'\bigg)^{\frac{1}{a}}.
\end{equation*}
\newpage
As noted in \cite{Drury}, $v(0, h')^{-1} \in L^{\frac{d}{2}, \infty}_{h'}$, so
we can apply H\"older's inequality to obtain
\begin{equation} \label{G L^aL^b Holder}
  ||\Tilde{G}||_{L^{a}_{h'}(L^{b}_{t}; |v(h)|)} \leq ||f||_{L^{p, 1}_{t}}^{d}
\end{equation}
for
\begin{equation*}
  \begin{cases}
    1 < a < \frac{d + 2}{2},
    \\
    a \leq b < \frac{2 a}{d + 2 - d a}, \text{ and}
    \\
    \frac{d}{p}
    = \frac{(d + 2) (d - 1)}{2} a^{-1} + b^{-1} - \frac{d (d - 1)}{2}.
  \end{cases}
\end{equation*}
%
% \begin{equation*}
% %
%   1 < a < \frac{d + 2}{2},
%   \quad a \leq b < \frac{2 a}{d + 2 - d a},
%   \quad \frac{d}{p} = \frac{(d + 2) (d - 1)}{2} a^{-1}
%   + b^{-1} - \frac{d (d - 1)}{2}.
% %
% \end{equation*}
%
Plugging \eqref{TG = M^d} and \eqref{G L^aL^b Holder} into
\eqref{TG interpolation bound},
\begin{equation} \label{M operator L^q to L^p,1}
||M_{r, \eta, \rho, k}f||_{L^{q}} \lesssim ||f||_{L^{p, 1}},
\end{equation}
for
\begin{equation} \label{p of a and b}
  \frac{d}{p}
  = \frac{(d + 2) (d - 1)}{2} a^{-1} + b^{-1} - \frac{d (d - 1)}{2},
\end{equation}
where $q = \frac{d (d + 1)}{2} p'$, and $a$ and $b$
satisfy~(Figure~\ref{fig: interpolation picture}):
\begin{equation*}
  \begin{cases}
    \frac{d}{d + 2} < a^{-1} < 1,
    \\
    b^{-1} \leq a^{-1},
    \\
    (d+ 2)a^{-1} - 2 b^{-1} < d, \text{ and}
    \\
    (p_{0} - (r + \epsilon)') a^{-1} + p_{0} ((r + \epsilon)' - 1)
    b^{-1} \geq p_{0} - 1.
  \end{cases}
\end{equation*}
%
% \begin{equation*}
% %
%   \frac{d}{d + 2} < a^{-1} < 1, 
%   \quad b^{-1} \leq a^{-1}, 
%   \quad (d+ 2)a^{-1} - 2 b^{-1} < d,
% %
% \end{equation*}
% %
% \begin{equation*}
% %
%   (p_{0} - (r + \epsilon)') a^{-1} + p_{0} ((r + \epsilon)' - 1)
%   b^{-1} \geq p_{0} - 1.
% %
% \end{equation*}
%
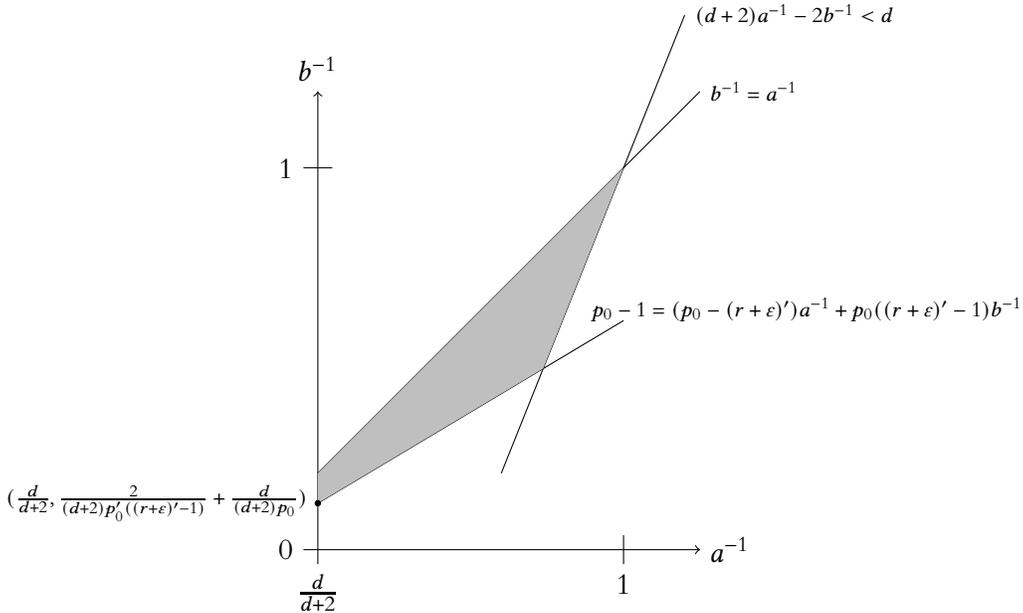
\begin{figure}[H]
  \centering
  \begin{tikzpicture}[x = 4 in, y = 4 in]
  %
    % Most nodes are \scriptsize to improve readability. Looked for a few
    % minutes for a global solution, but didn't find one that worked.
    %
    % y axis.
    %
    \draw [name path = vertical, ->] ($(.6, .5) - (0, .5 em)$) node [below]
    {$\frac{d}{d + 2}$}
    -- (.6, 1.1) node [above] {$b^{-1}$};
    %
    % x axis.
    %
    \draw [->] ($(.6, .5) - (.5 em, 0)$) node [left] {0}
    -- (1.1, .5) node [right] {$a^{-1}$};
    %
    % Mark off 1 on each axis.
    %
    \draw ($(1, .5) - (0, .5 em)$) node [below] {1}
    -- ($(1, .5) + (0, .5 em)$);
    \draw ($(.6, 1) - (.5 em, 0)$) node [left] {1}
    -- ($(.6, 1) + (.5 em, 0)$);
    %
    % The line y = x.
    %
    \draw [name path = equalslope] (.6, .6)
    -- (1.1, 1.1) node [right] {\scriptsize $b^{-1} = a^{-1}$};
    %
    % The line with the steepest slope.
    %
    \draw [name path = steepslope] (.84, .6) -- (1.08, 1.2) node [right]
    {\scriptsize $(d + 2)a^{-1} - 2 b^{-1} < d$};
    %
    % The lowest line with positive slope.
    %
    \draw [name path = shallowslope] (.6, .5608)
    -- (1, .8);
    \node at (1.24, .815)
    {\scriptsize $p_{0} - 1
    = (p_{0} - (r + \epsilon)') a^{-1} + p_{0} ((r + \epsilon)' - 1) b^{-1}$};
    %
    % Find intersections of the above lines.
    %
    \path [name intersections = {of = vertical and equalslope}];
    \coordinate (A) at (intersection-1);
    \path [name intersections = {of = equalslope and steepslope}];
    \coordinate (B) at (intersection-1);
    \path [name intersections = {of = steepslope and shallowslope}];
    \coordinate (C) at (intersection-1);
    \path [name intersections = {of = shallowslope and vertical}];
    \coordinate (D) at (intersection-1);
    %
    % Fill the region bounded by the above lines.
    %
    \fill [fill = lightgray] (A) -- (B) -- (C) -- (D) -- cycle;
    %
    % Label our special point.
    %
    \coordinate [label = left: {\scriptsize $(\frac{d}{d + 2},
    \frac{2}{(d + 2) p_{0}' ((r + \epsilon)' - 1)} + \frac{d}{(d + 2) p_{0}})$}]
    (thepoint) at (.6, .5608);
    \filldraw (thepoint) circle [radius = 1 pt];
  \end{tikzpicture}
  \caption{\small \textit{Range of $a$ and $b$ for
  which~\eqref{M operator L^q to L^p,1} holds with $p$
  satisfying~\eqref{p of a and b} and $q = \frac{d (d + 1)}{2} p'$.}}
  \label{fig: interpolation picture}
\end{figure}
Since $r + \epsilon \leq \frac{d + 2}{2}$, the point
$(a^{-1}, b^{-1}) = (\frac{d}{d + 2}, 
\frac{2}{(d + 2) p_{0}' ((r + \epsilon)' - 1)} + \frac{d}{(d + 2) p_{0}})$
lies on the boundary of this region and satisfies
\begin{equation*}
  \frac{(d + 2) (d - 1)}{2} a^{-1} + b^{-1} - \frac{d (d - 1)}{2}
  < \frac{d}{p_{0}}.
\end{equation*}
Taking $(a^{-1}, b^{-1})$ slightly inside of the region and using real
interpolation, we obtain
\begin{equation*}
  ||M_{r, \eta, \rho, k}f||_{L^{q}} \lesssim ||f||_{L^{p}}, 
  \quad q = \frac{d (d + 1)}{2} p', 
  \quad \frac{d}{p}
  > \frac{2}{(d + 2) p_{0}' ((r + \epsilon)' - 1)} + \frac{d}{(d + 2) p_{0}}.
\end{equation*}
Since this is true for all $0 < \epsilon < \frac{d + 2}{d} - r$, we have
\begin{equation*}
  ||M_{r, \eta, \rho, k}f||_{L^{q}} \lesssim ||f||_{L^{p}}, 
  \quad q = \frac{d (d + 1)}{2} p', 
  \quad \frac{d}{p}
  > \frac{2}{(d + 2) p_{0}' (r' - 1)} + \frac{d}{(d + 2) p_{0}},
\end{equation*}
which proves Lemma~\ref{main theorem second reduction} and hence
Theorem~\ref{main theorem}.

%%% Local Variables:
%%% mode: latex
%%% TeX-master: "main"
%%% End:

  %
  \section{Bounds on $r$} \label{counterexample}
  %
    % The counterexample to restrict the value of r to r < p'
%
%
We are not able to show, nor do we believe, that the range of $r$ is sharp.
The following proposition shows that $r \leq p'$ is necessary in any bound of
the form \eqref{main theorem estimate}, which corresponds to
$r \leq \frac{d^{2} + d + 2}{2}$ in the full Drury range. This counterexample in
dimension $d = 2$ is due to Ramos~\cite{Ramos2}.
\begin{proposition} \label{CounterexampleOnRProp}
  Suppose that for some $p$, $q$, and $r$, and all $f \in L^{p}(\mathbb{R}^{d})$,
  we have the bound
  \begin{equation} \label{main theorem estimate again}
    ||M_{r}\hat{f}||_{L^{q}(\Gamma)}
    \leq C_{p, r} ||f||_{L^{p}(\mathbb{R}^{d})}.
  \end{equation}
  Then $r \leq p'$.
\end{proposition}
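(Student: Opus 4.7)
The plan is to test the assumed inequality~\eqref{main theorem estimate again} against a one-parameter family of Schwartz functions whose Fourier transforms saturate $M_{r}$, then let the scale tend to zero. Fix a nonnegative $\phi \in C_{c}^{\infty}(\mathbb{R}^{d})$ with $\phi \equiv 1$ on $B_{1/2}(0)$ and $\mathrm{supp}(\phi) \subset B_{1}(0)$, and for each small $\delta > 0$ define $f_{\delta}$ by $\hat{f}_{\delta}(\xi) = \phi(\xi/\delta)$. By Fourier inversion, $f_{\delta}(x) = \delta^{d}\check{\phi}(\delta x)$, and a change of variables yields $\|f_{\delta}\|_{L^{p}(\mathbb{R}^{d})} = \delta^{d/p'}\|\check{\phi}\|_{L^{p}}$.

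For the lower bound on $\|M_{r}\hat{f}_{\delta}\|_{L^{q}(\Gamma)}$, note that $\gamma(0) = 0 \in \Gamma$. For any $x \in \Gamma$ with $|x| \leq 1$, the ball $B(x, 2)$ contains $B_{\delta/2}(0)$, on which $|\hat{f}_{\delta}|^{r} \equiv 1$, so
\[
\dashint_{B(x, 2)} |\hat{f}_{\delta}(\xi)|^{r} \, \mathrm{d}\xi \gtrsim \delta^{d},
\]
and hence $M_{r}\hat{f}_{\delta}(x) \gtrsim \delta^{d/r}$. Integrating over the arc $\{\gamma(t) : |t| \leq 1\}$, which has arclength of order one, produces $\|M_{r}\hat{f}_{\delta}\|_{L^{q}(\Gamma)} \gtrsim \delta^{d/r}$.

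Plugging the two estimates into~\eqref{main theorem estimate again} gives $\delta^{d/r} \lesssim C_{p, r}\,\delta^{d/p'}$ for all sufficiently small $\delta > 0$; letting $\delta \to 0^{+}$ forces $d/r \geq d/p'$, that is, $r \leq p'$. There is no serious obstacle here: the argument reduces to a scaling computation, and the only mild subtlety is choosing a test function whose Fourier transform has small support yet on which $M_{r}$ still captures a definite fraction of mass at a macroscopic scale along $\Gamma$.
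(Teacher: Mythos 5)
Your proof is correct and takes essentially the same approach as the paper: both test \eqref{main theorem estimate again} against a family of functions whose Fourier transform is a bump at scale $\delta$ (the paper uses the sharp cutoff $\chi_{[-t,t]^d}$, you use a smooth bump $\phi(\xi/\delta)$), compute $\|f_\delta\|_{L^p} \sim \delta^{d/p'}$ by scaling, obtain the lower bound $M_r\hat{f}_\delta \gtrsim \delta^{d/r}$ on a fixed-length arc of $\Gamma$ near the origin using a ball of fixed radius, and send $\delta \to 0$.
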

\begin{proof}
  For $0 < t < 1$, let $\hat{f_{t}} = \chi_{[-t, t]^{d}}$, and let
  $k = 1$. We first compute $f_{t}$.
  \begin{align*}
    f_{t}(x)
    &= \int_{\mathbb{R}^{d}} e^{2 \pi i x \xi} \chi_{[-t, t]}(\xi) \mathrm{d} \xi
    = \int_{-t}^{t} \dots \int_{-t}^{t} e^{2 \pi i x \xi} \mathrm{d} \xi
    \\
    &= \prod_{j = 1}^{d}
    \frac{e^{2 \pi i x_{j} t} - e^{-2 \pi i x_{j} t}}{2 \pi i x_{j}}
    = \prod_{j = 1}^{d} \frac{\sin(2 \pi x_{j} t)}{\pi x_{j}}.
  \end{align*}
  Thus, we have
  \begin{equation*}
    ||f_{t}||_{L^{p}(\mathbb{R}^{d})}
    = \bigg( \int_{\mathbb{R}^{d}} \bigg| \prod_{j = 1}^{d}
    \frac{\sin(2 \pi x_{j} t)}{\pi x_{j}} \bigg|^{p} \mathrm{d} x
    \bigg)^{\frac{1}{p}}
    = \bigg( \int_{\mathbb{R}^{d}} \prod_{j = 1}^{d} \bigg( \bigg|
    \frac{2 t \sin(y_{j})}{y_{j}} \bigg|^{p} \cdot \frac{1}{2 \pi t} \bigg)
    \mathrm{d} x \bigg)^{\frac{1}{p}}
    = C t^{\frac{d}{p'}}.
  \end{equation*}
  For $x \in [-1, 1]^{d}$, by taking the ball centered at $x$ with radius 10, we
  see that
  \begin{equation*}
    M_{r}\hat{f_{t}}(x) \gtrsim t^{\frac{d}{r}}.
  \end{equation*}
  Hence, we have
  \begin{equation*}
    ||M_{r}\hat{f_{t}}||_{L^{q}(\Gamma)} \gtrsim t^{\frac{d}{r}}.
  \end{equation*}
  Combining these estimates and \eqref{main theorem estimate again} gives
  \begin{equation*}
    t^{\frac{d}{r}} \lesssim t^{\frac{d}{p'}}.
  \end{equation*}
  Sending $t \rightarrow 0$ shows that
  \begin{equation*}
    \frac{d}{r} \geq \frac{d}{p'},
  \end{equation*}
  which means that $r \leq p'$.
\end{proof}
  \newpage
  \printbibliography
\end{document}